\newtheorem{teo}{Theorem}
\newtheorem{cor}{Corollary}
\newtheorem{rem}{Remark}
\newtheorem{prop}{Proposition}
\newtheorem{lem}{Lemma}
\newcommand{\R}{\mathbb{R}}
\begin{document}
\title[A nonexist. result for sign-changing sol. of the B-N problem in low dimensions]{A nonexistence result for sign-changing solutions of the Brezis-Nirenberg problem in low dimensions}
\author{Alessandro Iacopetti and Filomena Pacella}
\date{}
\subjclass[2010]{35J91, 35J61 (primary), and 35B33, 35B40,  35J20 (secondary)} 
\keywords{Semilinear elliptic equations, critical exponent, sign-changing solutions, asymptotic behavior}
\thanks{Research partially supported by MIUR-PRIN project-201274FYK7\underline\ 005 and GNAMPA-INDAM.}
\address[Alessandro Iacopetti]{Dipartimento di Matematica e Fisica, Universit\`a degli Studi di Roma Tre, L.go S. Leonardo Murialdo 1, 00146 Roma, Italy}
\email{iacopetti@mat.uniroma3.it}
\address[Filomena Pacella]{Dipartimento di Matematica, Universit\`a di Roma "La Sapienza", P.le Aldo Moro 5, 00185 Roma, Italy}
\email{pacella@mat.uniroma1.it}

\begin{abstract}
We consider the Brezis-Nirenberg problem:
\begin{equation*}
\begin{cases}
-\Delta u = \lambda u + |u|^{2^* -2}u & \hbox{in}\ \Omega\\
u=0 & \hbox{on}\ \partial \Omega,
\end{cases}
\end{equation*}
where $\Omega$ is a smooth bounded domain in $\R^N$, $N\geq 3$, $2^{*}=\frac{2N}{N-2}$ is the critical Sobolev exponent and $\lambda>0$  a positive parameter.

The main result of the paper shows that if $N=4,5,6$ and $\lambda$ is close to zero there are no sign-changing solutions of the form 
$$u_\lambda=PU_{\delta_1,\xi}-PU_{\delta_2,\xi}+w_\lambda, $$
where $PU_{\delta_i}$ is the projection on $H_0^1(\Omega)$ of the regular positive solution of the critical problem in $\R^N$, centered at a point $\xi \in \Omega$ and $w_\lambda$ is a remainder term.

Some additional results on norm estimates of $w_\lambda$ and about the concentrations speeds of tower of bubbles in higher dimensions are also presented.
\end{abstract}
 \maketitle
\section{Introduction}
In this paper we study the semilinear elliptic problem:
\begin{equation}\label{PBN}
\begin{cases}
-\Delta u = \lambda u + |u|^{2^* -2}u & \hbox{in}\ \Omega\\
u=0 & \hbox{on}\ \partial \Omega,
\end{cases}
\end{equation}
where $\Omega$ is a smooth bounded domain in $\R^N$, $N\geq 3$, $\lambda$ is a positive real parameter and $2^{*}=\frac{2N}{N-2}$ is the critical Sobolev exponent for the embedding of $H_0^1(\Omega)$ into $L^{2^*}(\Omega)$.

This problem is known as "the Brezis-Nirenberg problem" because the first fundamental results about the existence of positive solutions were obtained by H. Brezis and L. Nirenberg in 1983 in the celebrated paper \cite{BN}. From their results it came out that the dimension was going to play a crucial role in the study of \eqref{PBN}. Indeed they proved that if $N\geq 4$ there exists a positive solution of \eqref{PBN} for every $\lambda \in (0,\lambda_1(\Omega))$, $\lambda_1(\Omega)$ being the first eigenvalue of $-\Delta$ in $\Omega$ with Dirichlet boundary conditions, while if $N=3$ positive solutions exists only for $\lambda$ away from zero. In particular, in the case of the ball $B$ they showed that there are no positive solutions in the interval $(0,\frac{\lambda_1(B)}{4})$. 

Since then several other interesting results were obtained for positive solutions, in particular about the asymptotic behavior of solutions, mainly for $N\geq 5$ because also the case $N=4$ presents more difficulties compared to the higher dimensional ones.

Concerning the case of sign-changing solutions, existence results hold if $N\geq4$ both for $\lambda \in (0,\lambda_1(\Omega))$ and $\lambda > \lambda_1(\Omega)$ as shown in \cite{ABP2}, \cite{CW}, \cite{CFP}.

The case $N=3$ presents even more difficulties than in the study of positive solutions. In particular in the case of the ball is not yet known what is the least value $\bar \lambda$ of the parameter $\lambda$ for which sign-changing solutions exist, neither whether $\bar\lambda$ is larger or smaller than $\lambda_1(B)/4$. This question, posed by H. Brezis, has been given a partial answer in \cite{AMP3}. However it is interesting to observe that in the study of sign-changing solutions even the "low dimensions" $N=4,5,6$ exhibit some peculiarities. Indeed it was first proved by Atkinson, Brezis and Peletier in \cite{ABP} that if $\Omega$ is a ball there exists $\lambda^*=\lambda^*(N)$ such that there are no radial sign-changing solutions of \eqref{PBN} for $\lambda \in (0,\lambda^*)$. Later this result was reproved in \cite{AY2} in a different way.

Moreover for $N\geq 7$ a recent result of Schechter and Zou \cite{SZ} shows that in any bounded smooth domain
there exist infinitely many sign-changing solutions for any $\lambda>0$. Instead if $N=4,5,6$ only $N+1$ pairs of solutions, for all $\lambda>0$, have been proved to exist in \cite{CW} but it is not clear that they change sign.

Coming back to the nonexistence result of \cite{ABP} and \cite{AY2} an interesting question would be to see whether and in which way it could be extended to other bounded smooth domains. 

Since the result of \cite{ABP} and \cite{AY2} concerns nodal radial solutions in the ball the first issue is to understand what are, in general bounded domains, the sign-changing solutions which play the same role as the radial nodal solutions in the case of the ball. A main property of a radial nodal solution in the ball is that its nodal set does not touch the boundary therefore, a class of solutions to consider, in general bounded domains, could be the one made of functions which have this property.

Moreover, in analyzing the asymptotic behavior of least energy nodal radial solutions $u_\lambda$ in the ball, as $\lambda \rightarrow 0$, in dimension $N\geq7$ (in which case they exist for all $\lambda \in (0,\lambda_1(B)$), see \cite{CSS}) one can prove (see \cite{Iac1}) that their limit profile is that of a "tower of two bubbles". This terminology means that the positive part and the negative part of the solutions $u_\lambda$ concentrate at the same point (which is obviously the center of the ball) as $\lambda \rightarrow 0$ and each one has the limit profile, after suitable rescaling, of a "standard" bubble in $\R^N$, i.e. of a positive solution of the critical exponent problem in $\R^N$. More precisely the solutions $u_\lambda$ can be written in the following way:
\begin{equation}\label{formasoluz}
u_\lambda=PU_{\delta_1,\xi}-PU_{\delta_2,\xi}+w_\lambda,
\end{equation}
where $PU_{\delta_i,\xi}$, $i=1,2$ is the projection on $H_0^1(\Omega)$ of the regular positive solution of the critical problem in $\R^N$, centered at $\xi=0$, with rescaling parameter $\delta_i$ and $w_\lambda$ is a remainder term which converges to zero in $H_0^1(\Omega)$.

It is also interesting to observe that, thanks to a recent result of \cite{IACVAIR}, sign-changing bubble-tower solutions exist also in bounded smooth symmetric domains in dimension $N\geq 7$ for $\lambda$ close to zero, and they have the property that their nodal set does not touch the boundary of the domain.

In view of all these remarks we are entitled to assert that in general bounded domains sign-changing solutions which behave as the radial ones in the ball, at least for $\lambda$ close to zero, are the ones which are of the form
\eqref{formasoluz}.
Hence a natural extension of the nonexistence result of \cite{ABP} and \cite{AY2} would be to show that, in dimension $N=4,5,6$, sign-changing solutions of the form \eqref{formasoluz} do not exist in any bounded smooth domain.

This is indeed the main aim of this paper. Let us also note that in the 3-dimensional case a similar nonexistence result was already proved in \cite{AMP3}. Indeed, in studying the asymptotic behavior of low-energy nodal solutions it was shown in \cite{AMP3} that their positive and negative part cannot concentrate at the same point, as $\lambda$ tends to a limit value $\bar\lambda>0$. In the case $N\geq 4$ this question was left open in \cite{AMP2}. Therefore our results also complete the analysis made in these last two papers.

To state precisely our result let us recall that the functions

\begin{equation}\label{Udelta}
U_{\delta,\xi}(x)=\alpha_N \frac{\delta^{\frac{N-2}{2}}}{\left(\delta^2+|x-\xi|^2\right)^{\frac{N-2}{2}}},\qquad \delta>0, \ \xi \in \R^N,
\end{equation}
$\alpha_N:=[N(N-2)]^{\frac{N-2}{4}}$, describe all regular positive solutions of the problem
\begin{equation*}
\begin{cases}
-\Delta U = U^{\frac{N+2}{N-2}} & \hbox{in} \ \R^N,\\
U(x) \rightarrow 0, & \hbox{as} \ |x|\rightarrow + \infty.
\end{cases}
\end{equation*}
Then, denoting by $PU_{\delta}$ their projection on $H_0^1(\Omega)$, and by $\|u\|:=\int_\Omega |\nabla u|^2 \ dx$ for any $u \in H_0^1(\Omega)$, we have:

\begin{teo}\label{teoprincipintro}
Let $N=4,5,6$ and $\xi$ a point in the domain $\Omega$. Then, for $\lambda$ close to zero, Problem \eqref{PBN} does not admit any sign-changing solution $u_\lambda$ of the form \eqref{formasoluz} with $\delta_i=\delta_i(\lambda)$, $i=1,2$, such that $\delta_2=o(\delta_1)$, $\|w_\lambda\| \rightarrow 0$ and $|w_\lambda|=o(\delta_1^{-\frac{N-2}{2}})$, $|\nabla w_\lambda|=o(\delta_1^{-\frac{N}{2}})$ uniformly in compact subsets of $\Omega$, as $\lambda \rightarrow 0$.
\end{teo}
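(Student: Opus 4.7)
The natural approach is a localized Pohozaev-type identity on a small fixed ball $B_{r_0}(\xi) \subset \Omega$. Multiplying the equation $-\Delta u_\lambda = \lambda u_\lambda + |u_\lambda|^{2^*-2}u_\lambda$ by the dilation generator $(x-\xi)\cdot\nabla u_\lambda + \frac{N-2}{2}u_\lambda$ and integrating, the bulk $|u_\lambda|^{2^*}$ contribution cancels and one obtains
\begin{equation*}
\lambda \int_{B_{r_0}(\xi)} u_\lambda^2\, dx = \mathcal{P}_{\partial B_{r_0}}(u_\lambda) + \frac{\lambda}{2}\int_{\partial B_{r_0}(\xi)} u_\lambda^2 (x-\xi)\cdot\nu\, d\sigma + \frac{N-2}{2N}\int_{\partial B_{r_0}(\xi)} |u_\lambda|^{2^*}(x-\xi)\cdot\nu\, d\sigma,
\end{equation*}
where $\mathcal{P}_{\partial B_{r_0}}$ denotes the Pohozaev boundary quadratic form. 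I would then expand both sides asymptotically using the ansatz \eqref{formasoluz} and the stated hypotheses, aiming to extract a contradiction from the dimensional mismatch specific to $N\in\{4,5,6\}$.

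On the left, $\int(PU_{\delta_2,\xi})^2 = O(\delta_2^{2}) = o(\delta_1^{2})$ since $\delta_2=o(\delta_1)$, while the cross terms with $w_\lambda$ are $o(\lambda \delta_1^{2})$ via Cauchy--Schwarz and $\|w_\lambda\|\to 0$; a standard scaling computation then gives $\lambda\int_{B_{r_0}}u_\lambda^2 = c_N \lambda \delta_1^{2}(1+o(1))$ for $N=5,6$ and $c_4 \lambda \delta_1^{2}|\log\delta_1|(1+o(1))$ for $N=4$, with positive constants $c_N$. For the right, on the fixed sphere $\partial B_{r_0}(\xi)$ one uses the far-field expansion
\begin{equation*}
PU_{\delta_i,\xi}(x) = \alpha_N(N-2)\omega_{N-1}\, \delta_i^{(N-2)/2} G(x,\xi) + O\bigl(\delta_i^{(N+2)/2}\bigr),
\end{equation*}
where $G(x,\xi)$ is the Dirichlet Green function of $\Omega$; the pointwise hypotheses on $w_\lambda$, combined with interior elliptic regularity for the equation satisfied by $w_\lambda$ away from $\xi$, render the $w_\lambda$ contribution to $\mathcal{P}_{\partial B_{r_0}}$ negligible compared with the bubble term. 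Splitting $G = \Gamma - H$ (fundamental solution minus regular part) and using that $\Gamma$ and $H$ are harmonic off $\xi$, so that $\mathcal{P}[\Gamma]=\mathcal{P}[H]=0$, the mean value property yields
\begin{equation*}
\mathcal{P}_{\partial B_{r_0}}\bigl(G(\cdot,\xi)\bigr) = \tfrac{N-2}{2} H(\xi,\xi) > 0,
\end{equation*}
the positive Robin function value; hence the leading RHS is $C_N H(\xi,\xi)\, \delta_1^{N-2}$ with $C_N>0$.

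Equating the two sides yields the forced relation $\lambda \sim C_N' H(\xi,\xi)\, \delta_1^{N-4}$ for $N=5,6$ and $\lambda \sim C_4' H(\xi,\xi)/|\log\delta_1|$ for $N=4$, both compatible with $\lambda\to 0$, so the single Pohozaev identity is not in itself contradictory. The nonexistence must then be extracted from a second, independent identity. The natural choices are either a Pohozaev identity on an intermediate ball $B_\rho(\xi)$ with $\delta_2 \ll \rho \ll \delta_1$ (where $PU_{\delta_1,\xi}$ is nearly constant on $\partial B_\rho$ while $PU_{\delta_2,\xi}$ is in its tail regime), or testing the equation against $\delta_2 \partial_{\delta_2} PU_{\delta_2,\xi}$, which lies in the kernel of the linearized critical operator at $U_{\delta_2,\xi}$. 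In either case, the bubble-bubble interaction integral $\int PU_{\delta_1,\xi} PU_{\delta_2,\xi}$ enters explicitly, producing a second relation between $\lambda, \delta_1, \delta_2$ incompatible with the first precisely for $N\in\{4,5,6\}$ (whereas for $N \geq 7$ both relations are simultaneously solvable, consistent with the existence result of \cite{IACVAIR}). The principal technical obstacle is the sharp asymptotic evaluation of these interaction integrals and of the intermediate-scale Pohozaev quantities, especially delicate in $N=4$ because of competing logarithmic corrections; a secondary difficulty is upgrading the qualitative hypotheses on $w_\lambda$ --- which by themselves only exclude pointwise growth faster than $\delta_1^{-(N-2)/2}$ --- into effective smallness on $\partial B_{r_0}(\xi)$, done via the equation that $w_\lambda$ itself satisfies.
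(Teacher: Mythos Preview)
Your option (a) for the ``second identity'' --- a Pohozaev identity on an intermediate ball $B_\rho(\xi)$ with $\delta_2\ll\rho\ll\delta_1$ --- is exactly what the paper does, but the paper shows that this single identity already yields the contradiction, so your fixed-ball step is unnecessary. The paper takes $\rho=r:=\sqrt{\delta_1\delta_2}$; this specific choice makes $U_{\delta_1}\equiv U_{\delta_2}$ on $\partial B_r$, so the value terms on the boundary cancel exactly, while $|\nabla U_{\delta_2}|$ on $\partial B_r$ is of order $\delta_1^{-N/2}(\delta_1/\delta_2)^{1/2}$, large enough that the hypothesis $|\nabla w_\lambda|=o(\delta_1^{-N/2})$ renders $w_\lambda$ negligible \emph{at this scale without any elliptic bootstrap}. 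The resulting relation is
\[
a_1\,\lambda\,\delta_2^{2}(1+o(1)) \;=\; c_N\Bigl(\tfrac{\delta_2}{\delta_1}\Bigr)^{\frac{N-2}{2}}(1+o(1)),
\qquad\text{i.e.}\qquad
a_1\,\lambda\,\delta_1^{\frac{N-2}{2}} \;=\; c_N\,\delta_2^{\frac{N-6}{2}}(1+o(1)),
\]
whose left side tends to $0$ while the right side tends to $c_N$ ($N=6$) or $+\infty$ ($N=5$); for $N=4$ the left side carries a $\log(\delta_1/\delta_2)$ factor and the same contradiction holds. Note that the left side here is dominated by $\lambda\int_{B_r}U_{\delta_2}^2\sim\lambda\delta_2^2$, not by the $\delta_1$ bubble as on your fixed ball.

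Your fixed-ball step is not only redundant but actually problematic under the stated hypotheses: on a \emph{fixed} sphere $\partial B_{r_0}(\xi)$ one has $PU_{\delta_1,\xi}\sim\delta_1^{(N-2)/2}$, whereas the assumption only gives $|w_\lambda|=o(\delta_1^{-(N-2)/2})$, which may be enormously larger. You recognize this and propose to upgrade via the equation for $w_\lambda$, but the paper avoids this entirely by working at the intermediate scale, where the hypotheses are precisely calibrated to make $w_\lambda$ lower order. (The paper does use a global Pohozaev identity in Section~5 to pin down the concentration speeds when $N\ge 7$, but there $w_\lambda$ is assumed to lie in a finite-codimension subspace $V_{\lambda,\xi}$, which yields the sharp estimate $\|w_\lambda\|=O(\delta_1^{N-2}+\lambda\delta_1^2)$ needed for the boundary control; no such assumption is available in Theorem~\ref{teoprincipintro}.)
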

The previous notations mean that
$\frac{|w_\lambda|}{\delta_1^{-\frac{N-2}{2}}}$, $\frac{|\nabla w_\lambda|}{\delta_1^{-\frac{N}{2}}}$ converge to zero as $\lambda \rightarrow 0$ uniformly in compact subsets of $\Omega$.

The proof of the above theorem is based on a Pohozaev identity and fine estimates which are derived in a different way in the case $N=4$ or $N=5,6$. We would like to point out that it cannot be deduced by the proof of Theorem 3.1 of \cite{AMP3} which holds only in dimension three.

Concerning the assumption on the $C^1$-norm in compact subsets of $\Omega$  of the remainder term $w_\lambda$, whose gradient is only required not to blow up too fast, in Section 4 we show that it is almost necessary.

Note that we do not even require that $w_\lambda \rightarrow 0$ uniformly in $\Omega$ neither that it remains bounded as $\lambda \rightarrow 0$, but only a control of possible blow-up of $|w_\lambda|$ and $|\nabla w_\lambda|$. We delay to the next sections some further comments and comparisons with the case $N\geq 7$.

Finally in the last section we show that in dimension $N\geq 7$ if $(u_\lambda)$ is a family of solutions of type \eqref{formasoluz} with $|w_\lambda|$, $|\nabla w_\lambda|$ as in Theorem \ref{teoprincipintro} and $\delta_i=d_i \lambda^{\alpha_i}$, for some positive numbers $d_i=d_i(\lambda)$ with $0<c_1<d_i<c_2$, for all sufficiently small $\lambda$, and $0<\alpha_1<\alpha_2$, then necessarily: 
\begin{equation}\label{formaalpha}
\alpha_1=\frac{1}{N-4}, \ \ \ \alpha_2=\frac{3N-10}{(N-4)(N-6)}. 
\end{equation}

In other words we prove that if the concentration speeds are powers of $\lambda$ then necessarily the exponent must be as in \eqref{formaalpha}. Note that these are exactly the type of speeds assumed in \cite{IACVAIR} to construct the tower of bubbles in higher dimensions.
%
\section{Some preliminary results}
\begin{lem}\label{svilprbubb}
Let $\Omega$ be a smooth bounded domain of $\R^N$ and let $(\xi,\delta)\in \Omega\times\R^+$. As $\delta \rightarrow 0$ it holds:
$$PU_{\delta,\xi}(x)=U_{\delta,\xi}(x) -\alpha_N \delta^{\frac{N-2}{2}}H(x,\xi) + o(\delta^{\frac{N-2}{2}}), \ x \in \Omega $$
$C^1$-uniformly on compact subsets of $\Omega$, where $H$ is the regular part of the Green function for the Laplacian. Moreover, setting $\varphi_{\xi,\delta}(x):=U_{\delta,\xi}(x)-PU_{\delta,\xi}(x)$, the following uniform estimates hold:
\begin{description}
\item[(i)] $0\leq \varphi_{\xi,\delta}\leq U_{\delta,\xi}$,
\item[(ii)]  $\|\varphi_{\xi,\delta}\|^2=O\left((\frac{\delta}{d})^{N-2}\right)$,
\end{description}
where $d=d(\xi,\partial\Omega)$ is the euclidean distance between $\xi$ and the boundary of $\Omega$.
\end{lem}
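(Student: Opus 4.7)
The plan is to exploit the defining property of the projection: $PU_{\delta,\xi}$ solves $-\Delta PU_{\delta,\xi} = U_{\delta,\xi}^{(N+2)/(N-2)}$ in $\Omega$ with zero Dirichlet data, so that $\varphi_{\xi,\delta} = U_{\delta,\xi}-PU_{\delta,\xi}$ is harmonic in $\Omega$ with boundary trace $U_{\delta,\xi}|_{\partial\Omega}$. From this, the pointwise bounds in (i) follow from two applications of the maximum principle. First, $\varphi_{\xi,\delta}$ is harmonic and nonnegative on $\partial\Omega$, so $\varphi_{\xi,\delta}\geq 0$. Second, $PU_{\delta,\xi}$ solves an equation with a positive right-hand side and vanishes on $\partial\Omega$, hence $PU_{\delta,\xi}\geq 0$, which gives $\varphi_{\xi,\delta}\leq U_{\delta,\xi}$.

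For the expansion, I would introduce
\[
\psi_{\xi,\delta}(x):=\varphi_{\xi,\delta}(x)-\alpha_N\delta^{(N-2)/2}H(x,\xi),
\]
which is harmonic in $\Omega$ since both summands are. With the normalization $G(x,\xi)=|x-\xi|^{-(N-2)}-H(x,\xi)$, one has $H(x,\xi)=|x-\xi|^{-(N-2)}$ on $\partial\Omega$, and a first-order Taylor expansion of $(\delta^{2}+|x-\xi|^{2})^{-(N-2)/2}$ around $\delta=0$, valid uniformly when $|x-\xi|\geq d$, yields
\[
U_{\delta,\xi}(x)-\alpha_N\delta^{(N-2)/2}H(x,\xi)=O\!\left(\delta^{(N+2)/2}/d^{N}\right)\qquad\text{on }\partial\Omega.
\]
Thus $\psi_{\xi,\delta}$ is harmonic with boundary values $o(\delta^{(N-2)/2})$ for $\xi$ in compact subsets, and the maximum principle gives $\|\psi_{\xi,\delta}\|_{L^{\infty}(\Omega)}=o(\delta^{(N-2)/2})$. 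Interior gradient estimates for harmonic functions then upgrade this to $C^{1}$-uniform convergence on any compact $K\subset\Omega$, as required.

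For (ii), since $PU_{\delta,\xi}\in H_{0}^{1}(\Omega)$ and $\varphi_{\xi,\delta}$ is harmonic in $\Omega$, one has $\int_{\Omega}\nabla\varphi_{\xi,\delta}\cdot\nabla PU_{\delta,\xi}\,dx=0$. Writing $U_{\delta,\xi}=\varphi_{\xi,\delta}+PU_{\delta,\xi}$ and integrating by parts gives
\[
\|\varphi_{\xi,\delta}\|^{2}=\int_{\Omega}\nabla\varphi_{\xi,\delta}\cdot\nabla U_{\delta,\xi}\,dx=\int_{\Omega}U_{\delta,\xi}^{(N+2)/(N-2)}\varphi_{\xi,\delta}\,dx+\int_{\partial\Omega}\varphi_{\xi,\delta}\,\partial_\nu U_{\delta,\xi}\,d\sigma.
\]
Using (i), $\varphi_{\xi,\delta}\leq\sup_{\partial\Omega}U_{\delta,\xi}\leq C\,\delta^{(N-2)/2}/d^{N-2}$ throughout $\Omega$, while the change of variable $y=(x-\xi)/\delta$ shows $\int_{\Omega}U_{\delta,\xi}^{(N+2)/(N-2)}\,dx=O(\delta^{(N-2)/2})$. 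Together these make the volume integral $O((\delta/d)^{N-2})$, and a direct pointwise estimate $|\partial_\nu U_{\delta,\xi}|\leq C\delta^{(N-2)/2}/d^{N-1}$ on $\partial\Omega$ renders the boundary term of strictly lower order in $\delta$.

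The one delicate step is the expansion argument in the second paragraph: one must fix the normalization of $H$ consistent with the explicit constant $\alpha_N$ in \eqref{Udelta}, control the error of the far-field asymptotic $U_{\delta,\xi}(x)\sim\alpha_N\delta^{(N-2)/2}|x-\xi|^{-(N-2)}$ uniformly on $\partial\Omega$, and then invoke the interior Schauder bound for harmonic functions to pass from the $L^{\infty}$ decay of $\psi_{\xi,\delta}$ to the required $C^{1}$-uniform interior convergence. The rest of the argument is a purely bookkeeping integration by parts plus the maximum principle.
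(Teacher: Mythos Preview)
The paper simply cites Rey \cite{Rey}, Proposition~1, for this lemma, and your argument is essentially the standard one carried out there: harmonicity of $\varphi_{\xi,\delta}$ plus the maximum principle for (i) and for the expansion (via the auxiliary harmonic function $\psi_{\xi,\delta}$ and interior gradient estimates), and the $L^2$-orthogonality $\int_\Omega \nabla\varphi_{\xi,\delta}\cdot\nabla PU_{\delta,\xi}\,dx=0$ for (ii).

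One imprecision worth flagging: in your treatment of (ii), the boundary term $\int_{\partial\Omega}\varphi_{\xi,\delta}\,\partial_\nu U_{\delta,\xi}\,d\sigma$ is \emph{not} of strictly lower order in $\delta$ than the volume term; both are of order $\delta^{N-2}$. Your crude pointwise bound $|\varphi_{\xi,\delta}\,\partial_\nu U_{\delta,\xi}|\leq C\delta^{N-2}/d^{2N-3}$ multiplied by $|\partial\Omega|$ gives only $O(\delta^{N-2}/d^{2N-3})$, which does not match the stated $O((\delta/d)^{N-2})$ uniformly as $d\to 0$. To recover the right $d$-dependence, use instead the pointwise bound $|\varphi_{\xi,\delta}\,\partial_\nu U_{\delta,\xi}|(x)\leq C\delta^{N-2}|x-\xi|^{-(2N-3)}$ on $\partial\Omega$ together with a dyadic decomposition of $\partial\Omega$ into shells $\{2^k d\leq |x-\xi|<2^{k+1}d\}$; since $|\partial\Omega\cap B_r(\xi)|\leq Cr^{N-1}$, the sum over $k\geq 0$ converges to $O((\delta/d)^{N-2})$. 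With this correction your approach goes through.
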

\begin{proof}
See \cite{Rey}, Proposition 1 and its proof.
\end{proof}

\begin{lem}\label{lem1cc}
Let $N\geq 4$ and $(u_\lambda)$ be a family of sign-changing solutions of (\ref{PBN}) satisfying
$$\|u_\lambda\|^2 \rightarrow 2 S^{N/2}, \ \ \ \hbox{as}\ \lambda \rightarrow 0. $$
Then, for all sufficiently small $\lambda>0$, the set $\Omega \setminus \{x \in \Omega; \ u_\lambda(x)=0\}$ has exactly two connected components.
\end{lem}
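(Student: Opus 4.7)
The plan is to combine standard elliptic regularity, orthogonality of supports across distinct nodal components, and a Sobolev--Poincar\'e lower bound for the energy of any nontrivial solution of the Brezis--Nirenberg problem on a subdomain of $\Omega$.

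First I would observe that by elliptic regularity $u_\lambda \in C(\overline\Omega)$, so the nodal set $\{u_\lambda = 0\}$ is closed and its complement $\Omega\setminus\{u_\lambda=0\}$ is open; hence it decomposes as the disjoint union of its open connected components $\{\omega_j\}_{j\in J}$, on each of which $u_\lambda$ has constant sign. Since $u_\lambda$ is sign-changing, there is at least one component on which $u_\lambda>0$ and one on which $u_\lambda<0$, giving $|J|\ge 2$.

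To get the matching upper bound, I would set $v_j:=u_\lambda\,\chi_{\omega_j}$ for each $j$; standard nodal-set arguments show $v_j\in H_0^1(\omega_j)\hookrightarrow H_0^1(\Omega)$ and that $v_j$ is a nontrivial weak solution of
$$-\Delta v_j=\lambda v_j+|v_j|^{2^*-2}v_j \quad \text{in } \omega_j.$$
Testing with $v_j$ itself, together with the Sobolev inequality $\|v_j\|_{L^{2^*}(\Omega)}^{2}\le S^{-1}\|v_j\|^2$ and the Poincar\'e inequality $\|v_j\|_{L^2(\Omega)}^2\le \lambda_1(\Omega)^{-1}\|v_j\|^2$, yields
$$\Bigl(1-\tfrac{\lambda}{\lambda_1(\Omega)}\Bigr)\|v_j\|^2\;\le\;\|v_j\|_{L^{2^*}}^{2^*}\;\le\;S^{-2^*/2}\|v_j\|^{2^*},$$
so, dividing by $\|v_j\|^2>0$, one obtains the a priori lower bound
$$\|v_j\|^2\;\ge\;S^{N/2}\Bigl(1-\tfrac{\lambda}{\lambda_1(\Omega)}\Bigr)^{(N-2)/2}.$$

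Finally, since the $\omega_j$ are pairwise disjoint one has the orthogonal decomposition $\|u_\lambda\|^2=\sum_{j\in J}\|v_j\|^2$. If $|J|\ge 3$, the previous lower bound would force $\|u_\lambda\|^2\ge 3S^{N/2}(1-\lambda/\lambda_1(\Omega))^{(N-2)/2}$, which for all sufficiently small $\lambda$ exceeds $(5/2)S^{N/2}$, contradicting the hypothesis $\|u_\lambda\|^2\to 2S^{N/2}$; hence $|J|=2$. I do not expect a real obstacle here: the only technical point is checking that each $v_j$ genuinely lies in $H_0^1(\omega_j)$ and solves the equation weakly on $\omega_j$ (which follows from continuity of $u_\lambda$, density of $C_c^\infty(\omega_j)$, and the fact that $u_\lambda^\pm\in H_0^1(\Omega)$), and quantifying how small $\lambda$ must be so that $3(1-\lambda/\lambda_1(\Omega))^{(N-2)/2}>2$.
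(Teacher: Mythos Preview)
Your proof is correct and follows essentially the same route as the paper: both arguments test the equation against the restriction of $u_\lambda$ to a single nodal component, combine the Sobolev inequality with a Poincar\'e--type bound to obtain $\|\nabla u_\lambda\|_{L^2(\omega_j)}^2\ge S^{N/2}(1+o(1))$, and then use the energy constraint $\|u_\lambda\|^2\to 2S^{N/2}$ to rule out three or more components. Your write-up is simply more explicit about the constant $(1-\lambda/\lambda_1(\Omega))^{(N-2)/2}$ and the technical justification that $v_j\in H_0^1(\omega_j)$.
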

\begin{proof}
Let us consider the nodal set $Z_\lambda:=\{x \in \Omega; \ u_\lambda(x)=0\}$ and let $\Omega_1$ be a connected component of $\Omega \setminus Z_\lambda$. Multiplying (\ref{PBN}) by $u_\lambda$ and integrating on $\Omega_1$, we get that
$$ \int_{\Omega_1} |\nabla u_\lambda|^2 \ dx \geq S^{N/2} (1+o(1)),$$
where we have used the Sobolev embedding and the fact that $\lambda \rightarrow 0$ and $\lambda_1(\Omega_1) \int_{\Omega_1} u_\lambda^2 \ dx \leq \int_{\Omega_1} |\nabla u_\lambda|^2 \ dx$, where $\lambda_1(\Omega_1)$ is the first Dirichlet eigenvalue of $-\Delta$ on $\Omega_1$. 

Since $\|u_\lambda\|^2 \rightarrow 2S^{N/2}$, as $\lambda \rightarrow 0$, then for all sufficiently small $\lambda>0$ we deduce that  $\Omega \setminus Z_\lambda$ can have only two connected components.
\end{proof}

We recall now the Pohozaev identity for solutions of semilinear problems which are not necessarily zero on the boundary. Let $D$ be a bounded domain in $\R^N$, $N\geq3$, with smooth boundary and consider the equation 
\begin{equation}\label{eq1yyl}
-\Delta u = f(u) \ \ \hbox{in} \ D,
\end{equation}
where $s \mapsto f(s)$ is a continuos function. Denoting $F(s):=\int_0^s f(t) \ dt$, we have:\\
\begin{prop}\label{yylpohozaev}
Let $u$ be a $C^2$-solution of (\ref{eq1yyl}), then

\begin{equation}\label{yylp}
\begin{array}{ll}
\displaystyle & \displaystyle \int_D \left\{N F(u) - \frac{N-2}{2} u f(u)\right\} \ dx\\ 
\displaystyle =& \displaystyle \int_{\partial D} \left\{ \sum_{i=1}^N x_i \nu_i \left(F(u)-\frac{1}{2}|\nabla u|^2\right)+ \frac{\partial u}{\partial \nu}\sum_{i=1}^N x_i u_{x_i}+ \frac{N-2}{2}u\frac{\partial u}{\partial \nu}\right\} \ d\sigma,
\end{array}
\end{equation}

where $\nu$ denotes the outer normal to the boundary and $u_{x_i}$ is the partial derivative with respect to $x_i$ of $u$.
\end{prop}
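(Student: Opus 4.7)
The plan is to derive \eqref{yylp} by the classical ``multiply by $x\cdot\nabla u$ and by $u$'' trick, but being careful to carry along every boundary term since $u$ need not vanish on $\partial D$.

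\medskip

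\textbf{Step 1: multiplier $x\cdot\nabla u$.} I would multiply \eqref{eq1yyl} by $\sum_i x_i u_{x_i}$ and integrate on $D$. On the right-hand side, writing $f(u)\, x\cdot\nabla u = x\cdot\nabla (F(u))$ and integrating by parts gives
\[
\int_D f(u)(x\cdot\nabla u)\, dx = -N\int_D F(u)\, dx + \int_{\partial D}(x\cdot\nu)\, F(u)\, d\sigma .
\]
On the left-hand side, one integration by parts yields a volume term $\int_D \nabla u \cdot \nabla(x\cdot\nabla u)\,dx$ and a boundary term $-\int_{\partial D}\frac{\partial u}{\partial\nu}(x\cdot\nabla u)\, d\sigma$. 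The key algebraic observation is
\[
\nabla u\cdot\nabla(x\cdot\nabla u) = |\nabla u|^2 + \tfrac{1}{2}\, x\cdot\nabla(|\nabla u|^2),
\]
and a second integration by parts on the last term produces $-\frac{N}{2}\int_D |\nabla u|^2\, dx+\frac{1}{2}\int_{\partial D}(x\cdot\nu)|\nabla u|^2\, d\sigma$. Collecting everything yields the intermediate identity
\[
\int_D\!\Bigl\{NF(u)-\tfrac{N-2}{2}|\nabla u|^2\Bigr\}dx = \int_{\partial D}\!\Bigl\{(x\cdot\nu)\bigl(F(u)-\tfrac{1}{2}|\nabla u|^2\bigr)+\tfrac{\partial u}{\partial\nu}(x\cdot\nabla u)\Bigr\}d\sigma .
\]

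\medskip

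\textbf{Step 2: multiplier $u$.} To convert $\int_D |\nabla u|^2\, dx$ into $\int_D u f(u)\, dx$ I would multiply \eqref{eq1yyl} by $u$ and integrate by parts, which gives
\[
\int_D |\nabla u|^2\, dx = \int_D u f(u)\, dx + \int_{\partial D} u\,\tfrac{\partial u}{\partial\nu}\, d\sigma .
\]
Substituting $\frac{N-2}{2}$ times this equality into the identity from Step~1 eliminates the $|\nabla u|^2$ volume integral, moves the extra boundary term $\frac{N-2}{2}\int_{\partial D} u\frac{\partial u}{\partial\nu}\, d\sigma$ to the right-hand side, and produces exactly \eqref{yylp}.

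\medskip

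\textbf{What is delicate.} Since $u$ is a $C^2$-solution on $\overline{D}$ and $D$ has smooth boundary, Green's identities apply in the classical sense, so there are no regularity issues. The only place where one has to be careful is the bookkeeping of the three distinct boundary contributions -- the one coming from integrating $(x\cdot\nabla u)\Delta u$ by parts once, the one coming from integrating $\frac{1}{2} x\cdot\nabla(|\nabla u|^2)$ by parts, and the one coming from the $u$-multiplier step. The compact form in \eqref{yylp} is obtained only after these are combined and the $\frac{1}{2}(x\cdot\nu)|\nabla u|^2$ terms with opposite signs are absorbed into the grouping $F(u)-\frac{1}{2}|\nabla u|^2$. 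This is the only genuinely error-prone part of the argument.
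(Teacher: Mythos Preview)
Your derivation is correct and is the standard proof of the Pohozaev identity with non-vanishing boundary data: multiply by $x\cdot\nabla u$, use the Rellich-type computation $\nabla u\cdot\nabla(x\cdot\nabla u)=|\nabla u|^2+\tfrac12\,x\cdot\nabla(|\nabla u|^2)$, then combine with the energy identity obtained by testing against $u$. The paper itself does not supply a proof of this proposition; it is simply recalled as a known identity, so there is nothing to compare your argument against.
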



The following lemma gives information on the asymptotic behavior of the nodal set $Z_\lambda$ of solutions of \eqref{PBN} as $\lambda \rightarrow 0$.
 
\begin{lem}
Let $N\geq4$, $\xi \in \Omega$ and let $(u_\lambda)$ be a family of solutions of (\ref{PBN}), such that $u_\lambda=PU_{\delta_1,\xi}-PU_{\delta_2,\xi}+w_\lambda$, with $\delta_1=\delta_1(\lambda)$ and $\delta_2=\delta_2(\lambda)$ satisfying
\begin{equation*}
\delta_2=o(\delta_1) \ \ \hbox{and}\ \ \|w_\lambda\| \rightarrow 0,\ \hbox{as}\ \lambda \rightarrow 0. 
\end{equation*}
Moreover, assume that $w_\lambda$ satisfies $|w_\lambda|=o(\delta_1^{-\frac{N-2}{2}})$ uniformly in compact subsets of $\Omega$. Then, for all small $\epsilon>0$ there exists $\lambda_\epsilon>0$ such that the nodal set  $Z_\lambda$ is contained in the annular region $A_{r_1,r_2}(\xi):=\{x \in \Omega;\ r_1<|x-\xi|<r_2\}$, for all $\lambda \in (0,\lambda_\epsilon)$, where $r_1:=\delta_1^{\frac{1}{2}-\epsilon}\delta_2^{\frac{1}{2}+\epsilon}$, $r_2:=\delta_1^{\frac{1}{2}+\epsilon}\delta_2^{\frac{1}{2}-\epsilon}$.
\end{lem}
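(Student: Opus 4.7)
The plan reduces $Z_\lambda\subset A_{r_1,r_2}(\xi)$ to two pointwise sign statements: $u_\lambda<0$ on $\bar B_{r_1}(\xi)$ and $u_\lambda>0$ on $\Omega\setminus\bar B_{r_2}(\xi)$. The working expansion is
$$u_\lambda=U_{\delta_1,\xi}-U_{\delta_2,\xi}-\varphi_{\xi,\delta_1}+\varphi_{\xi,\delta_2}+w_\lambda,$$
where $\varphi_{\xi,\delta_i}=U_{\delta_i,\xi}-PU_{\delta_i,\xi}$ is nonnegative and, by Lemma \ref{svilprbubb}, of size $O(\delta_i^{(N-2)/2})$ uniformly on compact subsets of $\Omega$. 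Since $r_1,r_2\to 0$, the ball $\bar B_{r_2}(\xi)$ eventually lies in a fixed compact subset of $\Omega$ where the pointwise hypotheses on $w_\lambda$ apply.

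On $\bar B_{r_1}(\xi)$ I factor out $U_{\delta_2,\xi}$ from $u_\lambda\le U_{\delta_1,\xi}-U_{\delta_2,\xi}+\varphi_{\xi,\delta_2}+|w_\lambda|$. The ratio $U_{\delta_1,\xi}/U_{\delta_2,\xi}$ is monotone increasing in $|x-\xi|$, and since $\delta_2\ll r_1\ll\delta_1$ one computes at $|x-\xi|=r_1$ that it equals $(\delta_2/\delta_1)^{\epsilon(N-2)}(1+o(1))$, so the ratio is $o(1)$ throughout the ball. Using the lower bound $U_{\delta_2,\xi}(r_1)\sim\alpha_N\delta_2^{(N-2)/2}/r_1^{N-2}$, the remaining two ratios are also $o(1)$: $\varphi_{\xi,\delta_2}/U_{\delta_2,\xi}=O(r_1^{N-2})$, and $|w_\lambda|/U_{\delta_2,\xi}=o((\delta_2/\delta_1)^{\epsilon(N-2)})$ by combining the hypothesis $|w_\lambda|=o(\delta_1^{-(N-2)/2})$ with that same lower bound. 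Hence the factored bracket tends to $-1$, giving $u_\lambda<0$ strictly on $\bar B_{r_1}(\xi)$. Symmetrically, on $\partial B_{r_2}(\xi)$ I factor $U_{\delta_1,\xi}$ from $u_\lambda\ge U_{\delta_1,\xi}-U_{\delta_2,\xi}-\varphi_{\xi,\delta_1}-|w_\lambda|$: since $r_2\ll\delta_1$ one has $U_{\delta_1,\xi}(r_2)\sim\alpha_N\delta_1^{-(N-2)/2}$, which by hypothesis precisely dominates $|w_\lambda|$, while $U_{\delta_2,\xi}/U_{\delta_1,\xi}=(\delta_2/\delta_1)^{\epsilon(N-2)}(1+o(1))$ and $\varphi_{\xi,\delta_1}/U_{\delta_1,\xi}=O(\delta_1^{N-2})$ are both $o(1)$; this yields $u_\lambda>0$ on $\partial B_{r_2}(\xi)$.

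The closing step handles the full exterior $\Omega\setminus\bar B_{r_2}(\xi)$, which may reach $\partial\Omega$ where no pointwise control of $w_\lambda$ is available. I first check the hypothesis of Lemma \ref{lem1cc}: $\|u_\lambda\|^2\to 2S^{N/2}$ follows from $\|PU_{\delta_i,\xi}\|^2=S^{N/2}+o(1)$, from $\|w_\lambda\|\to 0$, and from the standard interaction estimate $\langle PU_{\delta_1,\xi},PU_{\delta_2,\xi}\rangle=o(1)$ forced by the scale disparity $\delta_2=o(\delta_1)$. The lemma then makes $\{u_\lambda<0\}$ connected; since it contains $\xi$ and misses $\partial B_{r_2}(\xi)$ by the previous step, it is trapped inside $B_{r_2}(\xi)$, so $u_\lambda\ge 0$ on $\Omega\setminus\bar B_{r_2}(\xi)$. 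Any interior zero in that set would make $u_\lambda$ a nonnegative superharmonic function there (since $-\Delta u_\lambda=(\lambda+|u_\lambda|^{2^*-2})u_\lambda\ge 0$) vanishing at an interior point; the strong minimum principle then forces $u_\lambda\equiv 0$ on its connected component, contradicting positivity on the portion of $\partial B_{r_2}(\xi)$ lying on that component's boundary. This outer step is where I expect the main difficulty: direct pointwise comparison fails because $U_{\delta_1,\xi}$ is only $O(\delta_1^{(N-2)/2})$ far from $\xi$, much smaller than the admissible blow-up rate of $w_\lambda$, so one must appeal to Lemma \ref{lem1cc} together with maximum-principle arguments in place of explicit estimates.
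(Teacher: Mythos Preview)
Your argument is correct and follows the same route as the paper: compute the sign of $u_\lambda$ on the two spheres $\partial B_{r_1}$ and $\partial B_{r_2}$ via the bubble expansion and the pointwise hypothesis on $w_\lambda$, then invoke Lemma~\ref{lem1cc} (exactly two nodal domains) to pass from these local sign conditions to the global containment $Z_\lambda\subset A_{r_1,r_2}(\xi)$. The paper carries out the sphere computations by writing $u_\lambda=\alpha_N\delta_1^{-(N-2)/2}\bigl(1-(\delta_2/\delta_1)^{\mp(N-2)\epsilon}\bigr)+o(\delta_1^{-(N-2)/2})$ directly, whereas you equivalently factor out the dominant bubble and bound ratios; your use of the monotonicity of $U_{\delta_1}/U_{\delta_2}$ to extend negativity to all of $\bar B_{r_1}$, and your explicit appeal to the strong minimum principle in the exterior $\Omega\setminus\bar B_{r_2}$, spell out in full a step the paper compresses into the single line ``From Lemma~\ref{lem1cc} and since $u_\lambda$ is a continuous function we deduce that $Z_\lambda\subset A_{r_1,r_2}(0)$.''
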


\begin{proof}
Without loss of generality we assume that $\xi=0$.
Let us fix a small $\epsilon>0$  and a compact neighborhood of the origin $K$. Thanks to the assumptions and Lemma \ref{svilprbubb}, we have the following expansion $u_\lambda(x)=U_{\delta_1}(x)-U_{\delta_2}(x)+o(\delta_1^{-\frac{N-2}{2}})$, which is uniform with respect to $x \in K$ and to all small $\lambda>0$. By definition, for all sufficiently small $\lambda>0$,  we have that $A_{r_1,r_2}(0) \subset K$. For $x$ such that $|x|=r_1$ we have:
\begin{eqnarray*}
U_{\delta_1}(x)&=&\alpha_N \frac{\delta_1^{\frac{N-2}{2}}}{(\delta_1^2+\delta_1^{1-2\epsilon}\delta_2^{1+2\epsilon})^{\frac{N-2}{2}}}=\alpha_N \frac{\delta_1^{-\frac{N-2}{2}}}{[1+(\frac{\delta_2}{\delta_1})^{1+2\epsilon}]^{\frac{N-2}{2}}}\\
&=&\alpha_N\ {\delta_1^{-\frac{N-2}{2}}}- \alpha_N \frac{N-2}{2}{\delta_1^{-\frac{N-2}{2}}}\left(\frac{\delta_2}{\delta_1}\right)^{1+2\epsilon} + o\left({\delta_1^{-\frac{N-2}{2}}}\left(\frac{\delta_2}{\delta_1}\right)^{1+2\epsilon}\right),
\end{eqnarray*}
and
\begin{eqnarray*}
U_{\delta_2}(x)&=&\alpha_N \frac{\delta_2^{\frac{N-2}{2}}}{(\delta_2^2+\delta_1^{1-2\epsilon}\delta_2^{1+2\epsilon})^{\frac{N-2}{2}}}=\alpha_N \frac{\delta_2^{\frac{N-2}{2}}\delta_1^{-\frac{N-2}{2}+(N-2)\epsilon} \delta_2^{{-\frac{N-2}{2}-(N-2)\epsilon}}}{[1+(\frac{\delta_2}{\delta_1})^{1-2\epsilon}]^{\frac{N-2}{2}}}\\
&=&\alpha_N \frac{{\delta_1^{-\frac{N-2}{2}} \left(\frac{\delta_2}{\delta_1}\right)^{-(N-2)\epsilon}}}{[1+(\frac{\delta_2}{\delta_1})^{1-2\epsilon}]^{\frac{N-2}{2}}}\\
&=&\alpha_N\ {\delta_1^{-\frac{N-2}{2}}}\left(\frac{\delta_2}{\delta_1}\right)^{-(N-2)\epsilon}- \alpha_N \frac{N-2}{2}{\delta_1^{-\frac{N-2}{2}}}\left(\frac{\delta_2}{\delta_1}\right)^{1-N\epsilon} + o\left({\delta_1^{-\frac{N-2}{2}}}\left(\frac{\delta_2}{\delta_1}\right)^{1-N\epsilon}\right).
\end{eqnarray*}
Hence, for $x \in K$, such that $|x|=r_1$, we have $$u_\lambda(x)=\alpha_N\ {\delta_1^{-\frac{N-2}{2}}}\left(1-\left(\frac{\delta_2}{\delta_1}\right)^{-(N-2)\epsilon}\right)+o(\delta_1^{-\frac{N-2}{2}}) <0$$
for all sufficiently small $\lambda>0$. On the other hand, by similar computations (just changing the sign of $\epsilon$ in every term of the previous equations), for $x$ such that $|x|=r_2$ we have
 $$u_\lambda(x)=\alpha_N\ {\delta_1^{-\frac{N-2}{2}}}\left(1-\left(\frac{\delta_2}{\delta_1}\right)^{+(N-2)\epsilon}\right)+o(\delta_1^{-\frac{N-2}{2}}) >0$$
for all sufficiently small $\lambda>0$. 

%
%
%
From Lemma \ref{lem1cc} and since $u_\lambda$ is a continuos function we deduce that $Z_\lambda \subset A_{r_1,r_2}(0)$ for all sufficiently small $\lambda>0$.
\end{proof}

\section{Proof of the nonexistence result}

We begin considering the case $N=5,6$ since the case $N=4$ requires different estimates.

\begin{proof}[\bf{Proof of Theorem \ref{teoprincipintro} for N=5,6}]
Arguing by contradiction let us assume that such a family of solutions exists and, without loss of generality set $\xi=0$. Defining $r:=\sqrt{\delta_1\delta_2}$, we apply the Pohozaev formula (\ref{yylp}) to $u_\lambda$ in the ball $B_{r}=B_r(0)$. Since $u_\lambda$ is a solution of (\ref{PBN}) we set $f(u):=\lambda u + |u|^{p-1}u$ and hence, using the notation of Proposition \ref{yylpohozaev}, we have $F(u)=\frac{\lambda}{2}u^2+\frac{1}{p+1}|u|^{p+1}$.  By elementary computations \footnote{\begin{eqnarray*} NF(u)-\frac{N-2}{2}u f(u) &=& N\left(\frac{\lambda}{2}u^2 + \frac{1}{p+1}|u|^{p+1}\right)-\frac{N-2}{2}(\lambda u^2+|u|^{p+1})\\
 &=&\left(\frac{N}{2}-\frac{N-2}{2}\right)\lambda u^2 + \left(\frac{N}{p+1}-\frac{N-2}{2}\right)|u|^{p+1}\\ 
 &=&\lambda u^2.
 \end{eqnarray*}} (see the footnote)
we get that the left-hand side of (\ref{yylp}) reduces to
 $$\lambda \int_{B_{r}} u_\lambda^2 \ dx.$$ For the right-hand side
 
 $$\displaystyle \int_{\partial B_{r}} \left\{ \sum_{i=1}^N x_i \nu_i \left(F(u_\lambda)-\frac{1}{2}|\nabla u_\lambda|^2\right)+ \frac{\partial u_\lambda}{\partial \nu}\sum_{i=1}^N x_i \frac{\partial u_\lambda}{\partial x_i}+ \frac{N-2}{2}u_\lambda\frac{\partial u_\lambda}{\partial \nu}\right\} \ d\sigma,
 $$
since $\partial B_{r}$ is a sphere, we have $\nu_i(x)=\frac{x_i}{|x|}$ for all $x \in \partial B_{r}$, $i=1,\ldots,N$, and hence $ \sum_{i=1}^N x_i \nu_i=|x|$. Furthermore since $ \frac{\partial u_\lambda}{\partial \nu}=\nabla u_\lambda \cdot \frac{x}{|x|} $ and $\sum_{i=1}^N x_i \frac{\partial u_\lambda}{\partial x_i}=\left( \nabla u_\lambda \cdot \frac{x}{|x|}\right)|x|$ we get that $$ \frac{\partial u_\lambda}{\partial \nu}\sum_{i=1}^N x_i \frac{\partial u_\lambda}{\partial x_i}=\left( \nabla u_\lambda \cdot \frac{x}{|x|}\right) \sum_{i=1}^N x_i \frac{\partial u_\lambda}{\partial x_i}=\left( \nabla u_\lambda \cdot \frac{x}{|x|}\right)^2|x|,$$
$$ u_\lambda\frac{\partial u_\lambda}{\partial \nu}= u_\lambda\left( \nabla u_\lambda \cdot \frac{x}{|x|}\right).$$
Thus (\ref{yylp}) rewrites as
\begin{equation}\label{yylpr}
\begin{array}{lll}
&&\displaystyle  \lambda \int_{B_{r}} u_\lambda^2 \ dx \\[12pt]
&=&\displaystyle \int_{\partial B_{r}} \left\{ |x| \left(F(u_\lambda)-\frac{1}{2}|\nabla u_\lambda|^2\right)+ \left( \nabla u_\lambda \cdot \frac{x}{|x|}\right)^2|x|+ \frac{N-2}{2}u_\lambda \left( \nabla u_\lambda \cdot \frac{x}{|x|}\right) \right\} \ d\sigma.
\end{array}
\end{equation}
We estimate the left-hand side of (\ref{yylpr}). Let us fix a compact subset $K \subset \Omega$; for $\lambda>0$ sufficiently small we get that $B_r \subset K$.
Thanks to Lemma \ref{svilprbubb}  we have
$PU_{\delta_j}=U_{\delta_j}-\varphi_{\delta_j}$, where $\varphi_{\delta_j}=O\left(\delta_j^{\frac{N-2}{2}}\right)$, for $j=1,2$, and this estimate is uniform for $x \in K$, in particular for $x \in B_{r}$. Thus, as $\lambda \rightarrow 0$, we get that
\begin{equation} \label{stimappp}
\begin{array}{lll}
\displaystyle \lambda \int_{B_{r}} u_\lambda^2 \ dx&=&\displaystyle \lambda \int_{B_{r}} \left(PU_{\delta_1}-PU_{\delta_2}+o( \delta_1^{-\frac{N-2}{2}})\right)^2 \ dx\\[12pt]
&=&\displaystyle\lambda \int_{B_{r}} \left(U_{\delta_1}-U_{\delta_2}-\varphi_{\delta_1}+\varphi_{\delta_2}+o( \delta_1^{-\frac{N-2}{2}})\right)^2 \ dx\\[12pt]
&=&\displaystyle\lambda \int_{B_{r}} \left(U_{\delta_1}-U_{\delta_2}+o( \delta_1^{-\frac{N-2}{2}})\right)^2 \ dx\\[12pt]
&=&\displaystyle\lambda \int_{B_{r}}\left(U_{\delta_1}^2+U_{\delta_2}^2 - 2U_{\delta_1}U_{\delta_2}+o(\delta_1^{-\frac{N-2}{2}}U_{\delta_1})+o(\delta_1^{-\frac{N-2}{2}}U_{\delta_2})+o( \delta_1^{-\frac{N-2}{2}})\right) \ dx\\[12pt]
&=&A+B+C+D+E+F.
\end{array}
\end{equation}
We estimate every term of the previous decomposition. 

\begin{equation*}
\begin{array}{lllll}
\displaystyle A&=&\displaystyle \lambda \int_{B_{r}}\alpha_N^2 \frac{\delta_1^{N-2}}{(\delta_1^2+|x|^2)^{N-2}} \ dx
&=&\displaystyle \alpha_N^2 \lambda \int_{B_{r}} \frac{\delta_1^{-(N-2)}}{(1+|x/\delta_1|^2)^{N-2}} \ dx\\[12pt]
&=&\displaystyle \alpha_N^2 \lambda \delta_1^2 \int_{B_{r}/\delta_1} \frac{1}{(1+|y|^2)^{N-2}}\ dy&\leq&\displaystyle \alpha_N^2 \lambda \delta_1^2 |B_{r}/\delta_1|\\[12pt]
&=&c_N \lambda \delta_1^2 \left(\frac{\delta_2}{\delta_1}\right)^{\frac{N}{2}},
\end{array}
\end{equation*}
where we have set $c_N:=\alpha_N^2 \frac{\omega_N}{N}$, $\omega_N$ is the measure of the $(N-1)$-dimensional unit sphere $\mathbb{S}^{N-1}$.
\begin{equation*}
\begin{array}{lllll}
\displaystyle B&=&\displaystyle \lambda \int_{B_{r}}\alpha_N^2 \frac{\delta_2^{N-2}}{(\delta_2^2+|x|^2)^{N-2}} \ dx
=\displaystyle \alpha_N^2 \lambda \int_{B_{r}} \frac{\delta_2^{-(N-2)}}{(1+|x/\delta_2|^2)^{N-2}} \ dx\\[12pt]
&=&\displaystyle \alpha_N^2 \lambda \delta_2^2 \int_{B_{r}/\delta_2} \frac{1}{(1+|y|^2)^{N-2}}\ dy\\[12pt]
&=&\displaystyle \alpha_N^2 \lambda \delta_2^2  \int_{\R^N} \frac{1}{(1+|y|^2)^{N-2}}\ dy + O\left( \lambda \delta_2^2 \int_{\left(\frac{\delta_1}{\delta_2}\right)^{\frac{1}{2}}}^{+\infty} \frac{r^{N-1}}{(1+r^2)^{N-2}}\ dr\right)\\[12pt]
&=&a_1 \lambda \delta_2^2 + O\left(  \lambda \delta_2^2 \left(\frac{\delta_2}{\delta_1}\right)^{\frac{N-4}{2}}\right),
\end{array}
\end{equation*}
where we have set $a_1:= \alpha_N^2 \int_{\R^N} \frac{1}{(1+|y|^2)^{N-2}}\ dy$. We point out that since $N=5$ or $N=6$  the function $\frac{1}{(1+|y|^2)^{N-2}} \in L^1(\R^N)$ while this is not true when $N=4$. 

\begin{eqnarray*}
\begin{array}{lll}
|C| &=&\displaystyle\lambda \ \alpha_N^2 \int_{B_{r}} \frac{\delta_1^{\frac{N-2}{2}}}{(\delta_1^2+|x|^2)^{\frac{N-2}{2}}}  \frac{\delta_2^{\frac{N-2}{2}}}{(\delta_2^2+|x|^2)^{\frac{N-2}{2}}} \ dx\\[12pt]
&=&\displaystyle\lambda \ \alpha_N^2 \int_{B_{r}/\delta_1} \frac{\delta_1^{\frac{N+2}{2}}}{(1+|y|^2)^{\frac{N-2}{2}}}  \frac{\delta_2^{\frac{N-2}{2}}}{(\delta_2^2+\delta_1^2|y|^2)^{\frac{N-2}{2}}} \ dy\\[12pt]
&=&\displaystyle\lambda \ \alpha_N^2 \int_{B_{r}/\delta_1} \frac{\delta_1^{-\frac{N-6}{2}}}{(1+|y|^2)^{\frac{N-2}{2}}}  \frac{\delta_2^{\frac{N-2}{2}}}{\left(\left(\frac{\delta_2}{\delta_1}\right)^2+|y|^2\right)^{\frac{N-2}{2}}} \ dy\\[12pt]
&\leq&\displaystyle\lambda \ \alpha_N^2 \left(\frac{\delta_2}{\delta_1}\right)^{\frac{N-2}{2}} \delta_1^2 \int_{B_{r}/\delta_1} \frac{1}{(1+|y|^2)^{\frac{N-2}{2}}|y|^{N-2}} \ dy \\[12pt]
&=& \displaystyle O\left(\lambda \left(\frac{\delta_2}{\delta_1}\right)^{\frac{N-2}{2}} \delta_1^2 \int_{0}^{\left(\frac{\delta_2}{\delta_1}\right)^{1/2}} \frac{r^{N-1}}{(1+r^2)^{\frac{N-2}{2}}r^{N-2}} \ dr\right)\\[14pt]
&=& \displaystyle O\left(\lambda \left(\frac{\delta_2}{\delta_1}\right)^{\frac{N}{2}} \delta_1^2 \right).
\end{array}
\end{eqnarray*}

\begin{eqnarray*}
\begin{array}{lll}
|D| &=&\displaystyle o\left(\lambda \delta_1^{-\frac{N-2}{2}}\ \int_{B_{r}} \frac{\delta_1^{\frac{N-2}{2}}}{(\delta_1^2+|x|^2)^{\frac{N-2}{2}}}  \ dx\right)\\[12pt]
&\leq&\displaystyle o\left(\lambda  \int_{B_{r}} \delta_1^{-(N-2)}  \ dx\right)\\[12pt]
&=&\displaystyle o\left(\lambda \delta_1^2  \left(\frac{\delta_2}{\delta_1}\right)^{\frac{N}{2}} \right).
\end{array}
\end{eqnarray*}

\begin{eqnarray*}
\begin{array}{lll}
|E| &=&\displaystyle o\left(\lambda \delta_1^{-\frac{N-2}{2}}  \int_{B_{r}} \frac{\delta_2^{\frac{N-2}{2}}}{(\delta_2^2+|x|^2)^{\frac{N-2}{2}}}  \ dx\right)\\[12pt]
&\leq&\displaystyle o\left(\lambda \delta_1^{-\frac{N-2}{2}} \int_{B_{r}} \frac{\delta_2^{\frac{N-2}{2}}}{|x|^{N-2}} \ dx\right)\\[12pt]
&=&\displaystyle o\left(\lambda \left(\frac{\delta_2}{\delta_1}\right)^{\frac{N}{2}} \right).
\end{array}
\end{eqnarray*}

\begin{eqnarray*}
\begin{array}{lll}
|F| &=&\displaystyle o\left(\lambda \delta_1^{-\frac{N-2}{2}} |B_{r}|\right)\\[12pt]
&=&\displaystyle o\left(\lambda\ \delta_1 \ \delta_2^{\frac{N}{2}} \right).
\end{array}
\end{eqnarray*}

Now we estimate the right-hand side of (\ref{yylpr}). Remembering that $F(u_\lambda)=\frac{\lambda}{2}u_\lambda^2+\frac{1}{p+1}|u_\lambda|^{p+1}$ we get that the first term is equal to
$$\int_{\partial B_{r}}  |x| \left(\frac{\lambda}{2}u_\lambda^2+\frac{1}{p+1}|u_\lambda|^{p+1}-\frac{1}{2}|\nabla u_\lambda|^2\right) \ d\sigma.$$
We observe that by definition of $r$ it is immediate to see that $$U_{\delta_1}(x)=U_{\delta_2}(x),$$  for all $x \in \partial B_r$, and hence we have

\begin{eqnarray*}
\begin{array}{lll}
\displaystyle  \int_{\partial B_{r}}  \frac{\lambda}{2}u_\lambda^2\ |x| \ d\sigma
&=&\displaystyle\frac{\lambda}{2} \int_{\partial B_{r}} \left(U_{\delta_1}-U_{\delta_2}+o\left(\delta_1^{-\frac{N-2}{2}}\right)\right)^2 \ |x|\ d\sigma\\[12pt]
&=&\displaystyle\frac{\lambda}{2} \int_{\partial B_{r}} \left[o\left(\delta_1^{-\frac{N-2}{2}}\right)\right]^2 \ |x|\ d\sigma\\[12pt]
&=&\displaystyle o\left( \lambda \delta_1^{-(N-2)} \int_{\partial B_{r}} |x|\ d\sigma\right)\\[12pt]
&=&\displaystyle o\left(\lambda \left(\frac{\delta_2}{\delta_1}\right)^{\frac{N}{2}} \delta_1^2\right).
\end{array}
\end{eqnarray*}
%
As in the previous case we have

\begin{eqnarray*}
\begin{array}{lll}
\displaystyle  \frac{1}{p+1}\int_{\partial B_{r}}|u_\lambda|^{p+1}|x| \ d\sigma
&=&\displaystyle\frac{1}{p+1} \int_{\partial B_{r}} |U_{\delta_1}-U_{\delta_2}+o(\delta_1^{-\frac{N-2}{2}})|^{p+1} \ |x|\ d\sigma\\[12pt]
&=&\displaystyle\frac{1}{p+1} \int_{\partial B_{r}} |o(\delta_1^{-\frac{N-2}{2}})|^{p+1} \ |x|\ d\sigma\\[12pt]
&=&\displaystyle o\left(\delta_1^{-N}\int_{\partial B_{r}} |x|\ d\sigma\right)\\[12pt]
&=&\displaystyle o\left(\left(\frac{\delta_2}{\delta_1}\right)^{\frac{N}{2}}\right).
\end{array}
\end{eqnarray*}

To complete the estimate of the first term it remains to analyze $$-\frac{1}{2}\int_{\partial B_{r}} |\nabla u_\lambda|^2 |x|  \ d\sigma.$$
%
 %
%
As before, writing $PU_{\delta_j}=U_{\delta_j}- \varphi_{\delta_j}$ for $j=1,2$ we have $$|\nabla u_\lambda|^2=|\nabla U_{\delta_1} - \nabla U_{\delta_2} - \nabla \varphi_{\delta_1}+ \nabla \varphi_{\delta_2}+ \nabla w_\lambda|^2=| \nabla U_{\delta_1} - \nabla U_{\delta_2} + \nabla \Phi_\lambda|^2, $$
where we have set $ \Phi_\lambda:=- \varphi_{\delta_1}+ \varphi_{\delta_2}+ w_\lambda$. 
Hence, we get that

\begin{equation}\label{eqdec3}
\begin{array}{lll}
&&\displaystyle -\frac{1}{2}\int_{\partial B_{r}} |\nabla u_\lambda|^2 |x|  \ d\sigma\\[16pt]
&=&\displaystyle  -\frac{1}{2}\int_{\partial B_{r}}| \nabla U_{\delta_1}|^2\ |x| \ d\sigma  -\frac{1}{2}\int_{\partial B_{r}} | \nabla U_{\delta_2}|^2\ |x| \ d\sigma + \int_{\partial B_{r}} \nabla U_{\delta_1} \cdot \nabla U_{\delta_2}\ |x| \ d\sigma \\[16pt] 
&&\displaystyle -  \int_{\partial B_{r}} \nabla U_{\delta_1} \cdot \nabla \Phi_\lambda\ |x| \ d\sigma  +  \int_{\partial B_{r}} \nabla U_{\delta_2} \cdot \nabla \Phi_\lambda\ |x| \ d\sigma-\frac{1}{2}  \int_{\partial B_{r}} | \nabla \Phi_\lambda|^2\ |x| \ d\sigma\\[16pt]
&=&A_1+B_1+C_1+D_1+E_1+F_1.
\end{array}
\end{equation}

By elementary computations, for all $i=1,\ldots,N$, $j=1,2$ we have:
\begin{eqnarray*}
\frac{\partial U_{\delta_j}}{\partial x_i}(x)&=&-\alpha_N (N-2)\delta_j^{\frac{N-2}{2}} \frac{x_i}{(\delta_j^2+|x|^2)^{\frac{N}{2}}},
\end{eqnarray*}
\begin{equation}\label{normal2udelta}
|\nabla U_{\delta_j}|^2=\alpha_N^2 (N-2)^2\delta_j^{N-2} \frac{|x|^2}{(\delta_j^2+|x|^2)^{N}}.
\end{equation}
Thus, we get that
\begin{equation*}
\begin{array}{lll}
A_1
&=&\displaystyle  -\alpha_N^2\frac{(N-2)^2}{2}\frac{\delta_1^{-(N+2)}}{\left[1+\left(\frac{\delta_2}{\delta_1}\right)\right]^N} \int_{\partial B_{r}} |x|^3 \ d\sigma\\[16pt]
&=&\displaystyle  -\alpha_N^2\frac{(N-2)^2}{2}\omega_N\frac{\delta_1^{-(N+2)}}{\left[1+\left(\frac{\delta_2}{\delta_1}\right)\right]^N} \delta_1^{\frac{N+2}{2}}\delta_2^{\frac{N+2}{2}}\\[24pt]
&=&\displaystyle  -\alpha_N^2\frac{(N-2)^2}{2}\omega_N\left(\frac{\delta_2}{\delta_1}\right)^{\frac{N+2}{2}} +O\left(\left(\frac{\delta_2}{\delta_1}\right)^{\frac{N+4}{2}}\right). \\[16pt]
\end{array}
\end{equation*}

\begin{equation*}
\begin{array}{lll}
B_1
&=&\displaystyle -\alpha_N^2\frac{(N-2)^2}{2}\frac{\delta_2^{N-2}\delta_1^{-N}\delta_2^{-N}}{\left[1+\left(\frac{\delta_2}{\delta_1}\right)\right]^N} \int_{\partial B_{r}} |x|^3 \ d\sigma\\[26pt]
&=&\displaystyle  -\alpha_N^2\frac{(N-2)^2}{2}\omega_N\left(\frac{\delta_2}{\delta_1}\right)^{\frac{N-2}{2}} +O\left(\left(\frac{\delta_2}{\delta_1}\right)^{\frac{N}{2}}\right). 
\end{array}
\end{equation*}

\begin{equation*}
\begin{array}{lll}
C_1
&=&\displaystyle \alpha_N^2 (N-2)^2\frac{\delta_1^{\frac{N-2}{2}}\delta_2^{\frac{N-2}{2}}\delta_1^{-N}\delta_1^{-\frac{N}{2}}\delta_2^{-\frac{N}{2}}}{\left[1+\left(\frac{\delta_2}{\delta_1}\right)\right]^{\frac{N}{2}}\left[1+\left(\frac{\delta_2}{\delta_1}\right)\right]^{\frac{N}{2}}} \int_{\partial B_{r}} |x|^3 \ d\sigma\\[32pt]
&=&\displaystyle \alpha_N^2 (N-2)^2\omega_N\frac{\left(\frac{\delta_2}{\delta_1}\right)^{\frac{N}{2}}} {\left[1+\left(\frac{\delta_2}{\delta_1}\right)\right]^{N}}\\[32pt]
&=&\displaystyle \alpha_N^2 (N-2)^2\omega_N{\left(\frac{\delta_2}{\delta_1}\right)^{\frac{N}{2}}} +O\left(\left(\frac{\delta_2}{\delta_1}\right)^{\frac{N+2}{2}}\right).
\end{array}
\end{equation*}
Taking into account the assumptions on the remainder term $w_\lambda$ and thanks to Lemma  \ref{svilprbubb} we have $|\nabla \Phi_\lambda| = o(\delta_1^{-\frac{N}{2}})$, 
uniformly on $\partial B_r$. Thus we have the following:
\begin{equation*}
\begin{array}{lll}
|D_1|&\leq&\displaystyle \int_{\partial B_{r}} |\nabla U_{\delta_1}| |\nabla \Phi_\lambda| |x| \ d\sigma\\[16pt]
&=&\displaystyle o\left(\frac{\delta_1^{\frac{N-2}{2}}}{(\delta_1^2+\delta_1\delta_2)^{\frac{N}{2}}}\delta_1^{-\frac{N}{2}} \int_{\partial B_{r}}|x|^2 \ d\sigma \right)\\[16pt]
&=&\displaystyle o\left(\frac{\delta_1^{\frac{N-2}{2}}\delta_1^{-N}}{\left[1+\left(\frac{\delta_2}{\delta_1}\right)\right]^{\frac{N}{2}}} \delta_1^{-\frac{N}{2}}\int_{\partial B_{r}}|x|^2 \ d\sigma \right)\\[26pt]
&=&\displaystyle o\left(\left(\frac{\delta_2}{\delta_1} \right)^{\frac{N+1}{2}}\right).\\[16pt]
\end{array}
\end{equation*}

\begin{equation*}
\begin{array}{lll}
|E_1|&\leq&\displaystyle \int_{\partial B_{r}} |\nabla U_{\delta_2}| |\nabla \Phi_\lambda| |x| \ d\sigma\\[16pt]
&=&\displaystyle o\left(\frac{\delta_2^{\frac{N-2}{2}}\delta_1^{-\frac{N}{2}}\delta_2^{-\frac{N}{2}}}{\left[1+\left(\frac{\delta_2}{\delta_1}\right)\right]^{\frac{N}{2}}} \delta_1^{-\frac{N}{2}} \int_{\partial B_{r}}|x|^2 \ d\sigma \right)\\[26pt]
&=&\displaystyle o\left(\left(\frac{\delta_2}{\delta_1} \right)^{\frac{N-1}{2}} \right).
\end{array}
\end{equation*}
And finally the last term of (\ref{eqdec3}) is trivial:
\begin{equation*}
\begin{array}{lllll}
\displaystyle |F_1|&=&\displaystyle o\left( \left(\frac{\delta_2}{\delta_1}\right)^{\frac{N}{2}} \right).
\end{array}
\end{equation*}

Now we analyze the term 
\begin{equation}\label{terzopezzo}
\int_{\partial B_{r}} \left( \nabla u_\lambda \cdot \frac{x}{|x|}\right)^2|x|\ d\sigma. 
\end{equation}
As before we write $u_\lambda=U_{\delta_1}-U_{\delta_2}+ \Phi_\lambda$ and we have
\begin{equation}
\begin{array}{lll}
\displaystyle \left( \nabla u_\lambda \cdot \frac{x}{|x|}\right)^2|x|
&=&\displaystyle \left( \nabla U_{\delta_1} \cdot \frac{x}{|x|}\right)^2|x| +  \left( \nabla U_{\delta_2} \cdot \frac{x}{|x|}\right)^2|x|-2  \left( \nabla U_{\delta_1} \cdot \frac{x}{|x|}\right) \left( \nabla U_{\delta_2} \cdot \frac{x}{|x|}\right)|x|\\[12pt]
&&\displaystyle +2  \left( \nabla U_{\delta_1} \cdot \frac{x}{|x|}\right) \left( \nabla \Phi_\lambda \cdot \frac{x}{|x|}\right)|x| -2  \left( \nabla U_{\delta_2} \cdot \frac{x}{|x|}\right) \left( \nabla \Phi_\lambda \cdot \frac{x}{|x|}\right)|x|\\[12pt]
&&\displaystyle+ \left( \nabla \Phi_\lambda \cdot \frac{x}{|x|}\right)^2|x| 
 \end{array}
\end{equation}
By elementary computations we see that for $j=1,2$
\begin{equation*}
\begin{array}{lll}
\displaystyle \left( \nabla U_{\delta_j} \cdot \frac{x}{|x|}\right)^2|x| &=&\displaystyle |\nabla U_{\delta_j}|^2\ |x|,\\[12pt]
\displaystyle  -2 \left( \nabla U_{\delta_1} \cdot \frac{x}{|x|}\right) \left( \nabla U_{\delta_2} \cdot \frac{x}{|x|}\right)\ |x| &=&\displaystyle -2 (\nabla U_{\delta_1} \cdot \nabla U_{\delta_2}) \ |x|,
 \end{array}
\end{equation*}
and for the remaining terms we have
\begin{equation*}
\begin{array}{lll}
\displaystyle \left|\ \pm 2  \left( \nabla U_{\delta_j} \cdot \frac{x}{|x|}\right) \left( \nabla \Phi_\lambda \cdot \frac{x}{|x|}\right)|x|\right| &\leq&\displaystyle2 |\nabla U_{\delta_j}| |\nabla \Phi_\lambda| |x|,\\[12pt]
\displaystyle \left|\left( \nabla \Phi_\lambda \cdot \frac{x}{|x|}\right)^2|x|  \right|&\leq& \left|\nabla \Phi_\lambda \right|^2 |x|.
 \end{array}
\end{equation*}

Thus, in order to estimate (\ref{terzopezzo}) it suffices to apply the estimates of the previous case, and hence we get that
$$\int_{\partial B_{r}} \left( \nabla u_\lambda \cdot \frac{x}{|x|}\right)^2|x|\ d\sigma = \alpha_N^2{(N-2)^2}\omega_N\left(\frac{\delta_2}{\delta_1}\right)^{\frac{N-2}{2}} +o\left(\left(\frac{\delta_2}{\delta_1} \right)^{\frac{N-2}{2}} \right). $$

To complete our analysis of (\ref{yylpr}) it remains only to study the term $$\frac{N-2}{2}\int_{\partial B_{r}}u_\lambda \left( \nabla u_\lambda \cdot \frac{x}{|x|}\right) \ d\sigma.$$ 

\begin{equation}\label{remainderderim}
\begin{array}{lll}
&&\displaystyle \frac{N-2}{2}\int_{\partial B_{r}}u_\lambda \left( \nabla u_\lambda \cdot \frac{x}{|x|}\right) \ d\sigma\\[12pt] 
&=&\displaystyle \frac{N-2}{2}\int_{\partial B_{r}} (U_{\delta_1}-U_{\delta_2}+ \Phi_\lambda)\left[(\nabla U_{\delta_1} - \nabla U_{\delta_2} + \nabla \Phi_\lambda)\cdot \frac{x}{|x|}\right]  \ d\sigma \\[12pt]
&=&\displaystyle\ \frac{N-2}{2}\int_{\partial B_{r}} \Phi_\lambda\left(\nabla U_{\delta_1}\cdot \frac{x}{|x|}\right)  \ d\sigma - \frac{N-2}{2}\int_{\partial B_{r}} \Phi_\lambda\left(\nabla U_{\delta_2}\cdot \frac{x}{|x|}\right)   \ d\sigma\\[12pt]
&&\displaystyle+ \frac{N-2}{2}\int_{\partial B_{r}} \Phi_\lambda\left(\nabla \Phi_\lambda\cdot \frac{x}{|x|}\right)  \ d\sigma\\[12pt]
&=& A_2+B_2+C_2.
\end{array}
\end{equation}

\begin{equation*}
\begin{array}{lll}
|A_2|&\leq&\displaystyle  \alpha_N^2\frac{(N-2)^2}{2}\frac{\delta_1^{\frac{N-2}{2}}\delta_1^{-N}}{\left[1+\left(\frac{\delta_2}{\delta_1}\right)\right]^{\frac{N}{2}}} \int_{\partial B_{r}} |\Phi_\lambda|\ |x| \ d\sigma\\[26pt]
&=&\displaystyle  o\left(\frac{\delta_1^{\frac{N-2}{2}}\delta_1^{-{N}}}{\left[1+\left(\frac{\delta_2}{\delta_1}\right)\right]^{\frac{N}{2}}} \int_{\partial B_{r}}\delta_1^{-\frac{N-2}{2}}\ |x| \ d\sigma \right)\\[26pt]
&=&\displaystyle  o\left(\frac{\delta_1^{-{N}}}{\left[1+\left(\frac{\delta_2}{\delta_1}\right)\right]^{\frac{N}{2}}} \delta_1^{\frac{N}{2}}\delta_2^{\frac{N}{2}} \right)\\[26pt]
&=&\displaystyle  o\left(\left(\frac{\delta_2}{\delta_1}\right)^{\frac{N}{2}}\right).
\end{array}
\end{equation*}

\begin{equation*}
\begin{array}{lll}
|B_2|
&\leq&\displaystyle  \alpha_N^2\frac{(N-2)^2}{2}\frac{\delta_2^{\frac{N-2}{2}}\delta_1^{-\frac{N}{2}}\delta_2^{-\frac{N}{2}}}{\left[1+\left(\frac{\delta_2}{\delta_1}\right)\right]^{\frac{N}{2}}} \int_{\partial B_{r}} |\Phi_\lambda|\ |x| \ d\sigma\\[26pt]
&=&\displaystyle  o\left(\frac{\delta_2^{\frac{N-2}{2}}\delta_1^{-\frac{N}{2}}\delta_2^{-\frac{N}{2}}}{\left[1+\left(\frac{\delta_2}{\delta_1}\right)\right]^{\frac{N}{2}}} \int_{\partial B_{r}}\delta_1^{-\frac{N-2}{2}}\ |x| \ d\sigma \right)\\[26pt]
&=&\displaystyle  o\left(\left(\frac{\delta_2}{\delta_1}\right)^{\frac{N-2}{2}} \right).
\end{array}
\end{equation*}

\begin{equation*}
\begin{array}{lll}
|C_2|&\leq&\displaystyle  \frac{(N-2)}{2}\int_{\partial B_{r}}|\Phi_\lambda| |\nabla\Phi_\lambda| \ d\sigma\\[16pt]
&=&\displaystyle o\left(\delta_1^{-\frac{N-2}{2}} \delta_1^{-\frac{N}{2}} \delta_1^{\frac{N-1}{2}}\delta_2^{\frac{N-1}{2}}\right)\\[12pt]
&=&\displaystyle  o\left(\left(\frac{\delta_2}{\delta_1}\right)^{\frac{N-1}{2}}\right).
\end{array}
\end{equation*}

Summing up all the estimates, from (\ref{yylp}), for all sufficiently small $\lambda>0$, we deduce the following equation 
\begin{equation}\label{eqfinalepohozaev}
a_1 \lambda \delta_2^2 + o\left(  \lambda \delta_2^2
\right) = \alpha_N^2\frac{(N-2)^2}{2}\omega_N\left(\frac{\delta_2}{\delta_1}\right)^{\frac{N-2}{2}} +o\left(\left(\frac{\delta_2}{\delta_1}\right)^{\frac{N-2}{2}}\right). 
\end{equation}
From (\ref{eqfinalepohozaev}) we deduce that 
\begin{equation}\label{eqfinalepohoza}
a_1 \lambda \delta_1^{\frac{N-2}{2}} (1+o(1))= \alpha_N^2\frac{(N-2)^2}{2}\omega_N \delta_2^{\frac{N-6}{2}}  (1+o(1)), 
\end{equation}
 for all sufficiently small $\lambda>0$. Since $N=5,6$ it is clear that \eqref{eqfinalepohoza} is contradictory, in fact, passing to the limit as $\lambda \rightarrow 0$, the left-hand side goes to zero while the right-hand side goes to a constant, when $N=6$ and diverges to $+\infty$ when $N=5$.
%
The proof is complete.
\end{proof}

Now we turn to the case $N=4$

\begin{proof}[\bf{Proof of Theorem \ref{teoprincipintro} for N=4}]
Again, without loss of generality we assume that $\xi=0$. We repeat the scheme of the proof for the previous case, but some modification is needed. In fact, since $N=4$, we have to change the estimate of the term $B$ in (\ref{stimappp}):
\begin{equation*}
\begin{array}{lllll}
\displaystyle B_*&=&\displaystyle \lambda \int_{B_{r}}\alpha_4^2 \frac{\delta_2^{2}}{(\delta_2^2+|x|^2)^{2}} \ dx
=\displaystyle \alpha_4^2 \lambda \int_{B_{r}/\delta_2} \frac{\delta_2^{-2}}{(1+|y|^2)^{2}} \delta_2^4\ dy\\[12pt]
&=&\displaystyle \alpha_4^2 \lambda \delta_2^2 \int_{B_{r}/\delta_2} \frac{1}{(1+|y|^2)^{2}}\ dy
=\displaystyle \alpha_4^2 \omega_4 \lambda \delta_2^2 \int_{0}^{\left(\frac{\delta_1}{\delta_2}\right)} \frac{r^3}{(1+r^2)^{2}}\ dr\\[12pt] 
\end{array}
\end{equation*}

It's elementary to see that 
$$\int_{0}^{\left(\frac{\delta_1}{\delta_2}\right)} \frac{r^3}{(1+r^2)^{2}}\ dr=O\left( \log\left(\frac{\delta_1}{\delta_2}\right)\right),$$
and hence we have that
\begin{equation}\label{stimaBstar}
B_*=O\left(\lambda \delta_2^2 \log\left(\frac{\delta_1}{\delta_2}\right)\right). 
\end{equation}
Thus, summing up \eqref{stimaBstar} with the other estimates made in the previous case (in which we take $N=4$), from (\ref{yylp}), we deduce the following asymptotic relation

\begin{equation}\label{eqfinalepohozaev2}
O\left(\lambda \delta_2^2 \log\left(\frac{\delta_1}{\delta_2}\right)\right) + o\left(\lambda \delta_2^2 \log\left(\frac{\delta_1}{\delta_2}\right)\right) = 2\alpha_4^2\omega_4 \left(\frac{\delta_2}{\delta_1}\right)+ o\left(\frac{\delta_2}{\delta_1}\right).
\end{equation}
It is clear that  (\ref{eqfinalepohozaev2}) gives a contradiction. In fact, dividing each side of (\ref{eqfinalepohozaev2}) by $\left(\frac{\delta_2}{\delta_1}\right)$ we have
\begin{equation}\label{eqfinalepohozaev3}
O\left(\lambda \delta_1\delta_2 \log\left(\frac{\delta_1}{\delta_2}\right)\right) + o\left(\lambda \delta_1\delta_2 \log\left(\frac{\delta_1}{\delta_2}\right)\right) = 2\alpha_4^2\omega_4 + o\left(1\right).
\end{equation}
Passing to the limit as $\lambda \rightarrow 0$ in (\ref{eqfinalepohozaev3}), taking into account that $\delta_2=o(\delta_1)$, we deduce that $0=2\alpha_4^2\omega_4$ which is a contradiction.
\end{proof}

\hfill
\begin{rem}
In \cite{AMP2, AMP3} sign-changing solutions $u_\lambda$ of \eqref{PBN} with low energy were studied, namely solutions such that $$\int_\Omega |\nabla u_\lambda|^2 \ dx \rightarrow 2S^{N/2}.$$
For this kind of solutions it is not difficult to show (see \cite{AMP2}, Theorem 1.1) that there exist two points $a_{1}=a_1(\lambda)$, $a_{2}=a_2(\lambda)$ in $\Omega$ (one of them is the global maximum point of $|u_\lambda|$) and two positive real numbers $\delta_{1}=\delta_{1}(\lambda)$, $\delta_{2}=\delta_{2}(\lambda)$, such that for $N\geq 4$, as $\lambda \rightarrow 0$, we have
$$\|u_\lambda - PU_{\delta_{1},a_{1}} +PU_{\delta_{2},a_{2}}\|  \rightarrow 0, \ \ {\delta_i^{-1}} {d(a_{i},\partial \Omega)}\rightarrow +\infty, \ \hbox{for}\ i=1,2,$$
where $d(a_i,\partial\Omega)$ is the euclidean distance between $a_i$ and the boundary of $\Omega$. Hence these solutions are of the form \eqref{formasoluz} but with possibly different concentration points. In \cite{AMP2}, assuming that the concentration speeds of $u_\lambda^+$ and $u_\lambda^-$ were comparable, it was proved that the positive and the negative part of $u_\lambda$ had to concentrate in two different points.

Since here we assume that the concentration speeds are different, our result also completes the study made in \cite{AMP2}.
\end{rem}

\section{About the estimate on the $C^1$-norm of $w_\lambda$}
 Here we show that the hypotheses of Theorem \ref{teoprincipintro} on the $C^1$-norm of the remainder term $w_\lambda$ are almost necessary. Indeed we have:
\begin{teo}\label{teostimac1}
Let $\Omega$ be a bounded open set of $\R^N$ with smooth boundary, $N\geq 4$, and let $\xi \in \Omega$.
Let $u_\lambda$ a solution of (\ref{PBN}) of the form $$u_\lambda=PU_{\delta_1,\xi}-PU_{\delta_2,\xi}+w_\lambda,$$ with $\delta_2=o(\delta_1)$ as $\lambda \rightarrow 0$. Assume that the remainder term $w_\lambda $ is uniformly bounded with respect to $\lambda$ in compact subsets of $\Omega$. Then for any open subset $\Omega^{\prime\prime}\subset\subset \Omega$ such that $\xi \in \Omega^{\prime\prime}$ and for all sufficiently small $\epsilon>0$, there exists a positive constant $C=C(\epsilon,N,\Omega^{\prime\prime})$ such that
 $$\|w_\lambda\|_{C^1(\bar\Omega^{\prime\prime})} \leq C \delta_1^{-\frac{N-2}{2}} \delta_2^{-1+O(\epsilon)},$$
 for all sufficiently small $\lambda>0$.
\end{teo}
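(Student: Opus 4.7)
The plan is to derive a Poisson-type equation for $w_\lambda$, estimate its right-hand side in $L^p$ on a slightly larger compact set, and then apply interior elliptic regularity together with Sobolev embedding to obtain the $C^1$-bound. Fix nested open sets $\Omega^{\prime\prime}\subset\subset\Omega^{\prime\prime\prime}\subset\subset\Omega$ with $\xi\in\Omega^{\prime\prime}$. Since $-\Delta PU_{\delta_i,\xi}=U_{\delta_i,\xi}^{2^*-1}$ in $\Omega$, subtracting from \eqref{PBN} gives $-\Delta w_\lambda=g_\lambda$ with $w_\lambda=0$ on $\partial\Omega$, where
$$g_\lambda:=\lambda u_\lambda+|u_\lambda|^{2^*-2}u_\lambda-U_{\delta_1,\xi}^{2^*-1}+U_{\delta_2,\xi}^{2^*-1}.$$
By Lemma \ref{svilprbubb} and the uniform $L^\infty$-bound on $w_\lambda$, on $\Omega^{\prime\prime\prime}$ we may write $u_\lambda=U_{\delta_1,\xi}-U_{\delta_2,\xi}+\tilde w_\lambda$ with $\|\tilde w_\lambda\|_{L^\infty(\Omega^{\prime\prime\prime})}=O(1)$ as $\lambda\to 0$.

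The central step is a pointwise estimate of $g_\lambda$ which isolates the dominant ``interference'' term. Applying the mean value theorem to $s\mapsto|s|^{2^*-2}s$ separately in the regions $\{U_{\delta_1,\xi}\geq U_{\delta_2,\xi}\}$ and $\{U_{\delta_1,\xi}<U_{\delta_2,\xi}\}$, one gets the elementary cancellation inequality
$$\Bigl|\,|U_{\delta_1,\xi}-U_{\delta_2,\xi}|^{2^*-2}(U_{\delta_1,\xi}-U_{\delta_2,\xi})-U_{\delta_1,\xi}^{2^*-1}+U_{\delta_2,\xi}^{2^*-1}\Bigr|\leq C\bigl(U_{\delta_1,\xi}U_{\delta_2,\xi}^{2^*-2}+U_{\delta_1,\xi}^{2^*-2}U_{\delta_2,\xi}\bigr).$$
Combined with a first-order expansion of the nonlinearity around $U_{\delta_1,\xi}-U_{\delta_2,\xi}$ to absorb the bounded contribution of $\tilde w_\lambda$, this yields on $\Omega^{\prime\prime\prime}$
$$|g_\lambda|\leq C\bigl(\lambda|u_\lambda|+U_{\delta_1,\xi}U_{\delta_2,\xi}^{2^*-2}+U_{\delta_1,\xi}^{2^*-2}U_{\delta_2,\xi}+U_{\delta_1,\xi}^{2^*-2}+U_{\delta_2,\xi}^{2^*-2}\bigr).$$

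For the $L^p$-estimate, the identity $(N-2)(2^*-2)/2=2$ together with the change of variable $y=(x-\xi)/\delta_2$ gives
$$\|U_{\delta_2,\xi}^{2^*-2}\|_{L^p(\Omega^{\prime\prime\prime})}\leq C\,\delta_2^{-2+N/p},\qquad p>N/4,$$
and combined with $\|U_{\delta_1,\xi}\|_{L^\infty(\Omega^{\prime\prime\prime})}\leq C\delta_1^{-(N-2)/2}$ the dominant contribution is
$$\|U_{\delta_1,\xi}U_{\delta_2,\xi}^{2^*-2}\|_{L^p(\Omega^{\prime\prime\prime})}\leq C\,\delta_1^{-(N-2)/2}\delta_2^{-2+N/p};$$
one checks, using $\delta_2=o(\delta_1)$ and $(N-2)/2\geq 1$, that all remaining terms in the pointwise bound for $g_\lambda$ are of strictly lower order. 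Applying interior $L^p$-regularity on the pair $\Omega^{\prime\prime}\subset\subset\Omega^{\prime\prime\prime}$ together with the Sobolev embedding $W^{2,p}\hookrightarrow C^1$ valid for $p>N$, one gets
$$\|w_\lambda\|_{C^1(\bar\Omega^{\prime\prime})}\leq C\bigl(\|g_\lambda\|_{L^p(\Omega^{\prime\prime\prime})}+\|w_\lambda\|_{L^p(\Omega^{\prime\prime\prime})}\bigr),$$
where the second term is uniformly bounded by hypothesis; choosing $p=N/(1-\epsilon)$ yields $N/p=1-\epsilon$ and completes the proof. The main subtlety lies in the cancellation inequality above: a crude estimate $|g_\lambda|\leq C(U_{\delta_1,\xi}^{2^*-1}+U_{\delta_2,\xi}^{2^*-1})$ would only give $\|w_\lambda\|_{C^1}\leq C\delta_2^{-N/2-\epsilon}$, and the improvement by a factor $(\delta_2/\delta_1)^{(N-2)/2}$ comes precisely from the presence of the term $+U_{\delta_2,\xi}^{2^*-1}$ in $g_\lambda$, which cancels the leading singularity of $|u_\lambda|^{2^*-2}u_\lambda$ in the zone where $U_{\delta_2,\xi}$ dominates (and symmetrically for $U_{\delta_1,\xi}^{2^*-1}$ in the outer zone).
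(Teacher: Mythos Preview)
Your approach is essentially the paper's: derive the equation for $w_\lambda$, estimate the right-hand side in $L^p$ with $p$ slightly above $N$, and conclude via Calder\'on--Zygmund interior estimates plus the Sobolev embedding $W^{2,p}\hookrightarrow C^1$. The paper also identifies $U_{\delta_2}^{2^*-2}U_{\delta_1}$ as the term that produces the final bound $\delta_1^{-(N-2)/2}\delta_2^{-1+O(\epsilon)}$.

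There is, however, a gap in your ``lower order'' claim. You bound the cancellation term pointwise by the symmetric sum $U_{\delta_1}U_{\delta_2}^{2^*-2}+U_{\delta_1}^{2^*-2}U_{\delta_2}$ and then assert that the second summand has strictly smaller $L^p(\Omega''')$-norm. A direct computation (change of variable $y=(x-\xi)/\delta_2$) gives
\[
\|U_{\delta_1}^{2^*-2}U_{\delta_2}\|_{L^p(\Omega''')}\sim C\,\delta_1^{-2}\,\delta_2^{N/p-(N-2)/2},
\]
and the ratio of this to the claimed dominant term $\delta_1^{-(N-2)/2}\delta_2^{N/p-2}$ equals $(\delta_1/\delta_2)^{(N-6)/2}$, which tends to $+\infty$ when $N\geq 7$. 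So for $N\geq 7$ your argument, as written, yields only $\|w_\lambda\|_{C^1}\leq C\delta_1^{-2}\delta_2^{(4-N)/2+O(\epsilon)}$, which is strictly weaker than the stated conclusion. The difficulty is that the global $L^p$-mass of $U_{\delta_1}^{2^*-2}U_{\delta_2}$ concentrates in the inner region $\{U_{\delta_2}>U_{\delta_1}\}$, precisely where your own region-wise mean-value argument produces the \emph{other} bound $U_{\delta_2}^{2^*-2}U_{\delta_1}$; passing to the sum throws this away.

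The fix is to keep the region-dependent pointwise bound $\max(U_{\delta_1},U_{\delta_2})^{2^*-2}\min(U_{\delta_1},U_{\delta_2})$ and estimate separately on $\{|x-\xi|\leq\sqrt{\delta_1\delta_2}\}$ and its complement; both pieces are then controlled by $\delta_1^{-(N-2)/2}\delta_2^{-2+N/p}$. The paper avoids this splitting by expanding $g(s)=|s|^{2^*-2}s$ asymmetrically around the more singular bubble, writing $g(-U_{\delta_2}+(U_{\delta_1}+\Phi_\lambda))-g(-U_{\delta_2})-g'(-U_{\delta_2})(U_{\delta_1}+\Phi_\lambda)$ plus the linear term $g'(-U_{\delta_2})(U_{\delta_1}+\Phi_\lambda)$: the linear term gives exactly $U_{\delta_2}^{2^*-2}U_{\delta_1}$, and the Taylor remainder is controlled by $|U_{\delta_1}+\Phi_\lambda|^{2^*-1}$, whose $L^p$-norm is $O(\delta_1^{-N/2+O(\epsilon)})$, genuinely lower order for every $N\geq 4$.
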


\begin{proof}
Without loss of generality we assume that $\xi=0$.
By definition $w_\lambda$ satisfies the following:
\begin{equation}\label{eqrestow}
\begin{cases}
-\Delta w_\lambda = \lambda w_\lambda + \lambda (PU_{\delta_1}-PU_{\delta_2})+U_{\delta_2}^p-U_{\delta_1}^p +|u_\lambda|^{2^* - 2}u_\lambda & \hbox{in} \ \Omega\\
w_\lambda=0& \hbox{on} \ \partial\Omega.
\end{cases}
\end{equation}
Let us set $f_\lambda:= \lambda w_\lambda + \lambda (PU_{\delta_1}-PU_{\delta_2})+U_{\delta_2}^p-U_{\delta_1}^p +|u_\lambda|^{2^* - 2}u_\lambda$. Since $w_\lambda$ and $u_\lambda$ are smooth, applying the Calder\'on-Zygmund inequality we deduce that for any $p \in (1,\infty)$, for any $\Omega^{\prime\prime} \subset \subset \Omega^{\prime} \subset \subset \Omega$ it holds:
\begin{equation}\label{CaldZigm}
\|w_\lambda\|_{2,p,\Omega^{\prime\prime}} \leq C(|w_\lambda|_{p,\Omega^\prime}+|f_\lambda|_{p,\Omega^\prime}), 
\end{equation}
where $C$ depends on $\Omega^\prime$, $N$, $p$, $\Omega^{\prime\prime}$. Thanks to the Sobolev imbedding theorem, for any $\epsilon>0$, if $p=N+\epsilon$ we have that $W^{2,p}(\Omega)$ is continuously imbedded in $C^{1,\gamma}(\bar\Omega)$, where $\gamma=1-\frac{N}{N+\epsilon}$. 
Let us consider two open subsets $\Omega^{\prime\prime}$, $\Omega^{\prime}$ of $\Omega$ such that $0 \in \Omega^{\prime\prime}$ and $\Omega^{\prime\prime} \subset \subset \Omega^{\prime} \subset \subset \Omega$. Thanks to \eqref{eqrestow} and \eqref{CaldZigm}, in order to estimate $\|w_\lambda\|_{C^1(\bar\Omega^{\prime\prime})}$ we have to estimate 
the following quantities: $|w_\lambda|_{N+\epsilon,\Omega^\prime}$, $|f_\lambda|_{N+\epsilon,\Omega^\prime}$. 

Thanks to the assumptions on $w_\lambda$ we deduce immediately that $|w_\lambda|_{N+\epsilon,\Omega^\prime}=O(1)$, uniformly with respect to $\lambda$. For the other term we argue as it follows: we set $g(s):=|s|^{2^*-2}s$, $\Phi_\lambda:=w_\lambda+\varphi_2-\varphi_1$, where $\varphi_j:=U_{\delta_j}-PU_{\delta_j}$, for $j=1,2$, and we write
\begin{equation*}
\begin{array}{lll}
 &&\displaystyle|f_\lambda|_{N+\epsilon,\Omega^\prime}\\[16pt]
&\leq& \displaystyle\lambda |w_\lambda|_{N+\epsilon,\Omega^\prime} + \lambda |PU_{\delta_1}|_{N+\epsilon,\Omega^\prime} + \lambda |PU_{\delta_2}|_{N+\epsilon,\Omega^\prime}+ |U_{\delta_1}^p|_{N+\epsilon,\Omega^\prime}\\[16pt] 
&& \displaystyle+ |g(U_{\delta_1}-U_{\delta_2}+\Phi_\lambda)-g(-U_{\delta_2})|_{N+\epsilon,\Omega^\prime}\\[16pt]
&\leq& \displaystyle\lambda |w_\lambda|_{N+\epsilon,\Omega^\prime} + \lambda |PU_{\delta_1}|_{N+\epsilon,\Omega^\prime} + \lambda |PU_{\delta_2}|_{N+\epsilon,\Omega^\prime}+ |U_{\delta_1}^p|_{N+\epsilon,\Omega^\prime}\\[16pt] 
&& \displaystyle+ |g(U_{\delta_1}-U_{\delta_2}+\Phi_\lambda)-g(-U_{\delta_2})-g^\prime(-U_{\delta_2})(U_{\delta_1}+\Phi_\lambda)|_{N+\epsilon,\Omega^\prime} + |g^\prime(-U_{\delta_2})(U_{\delta_1}+\Phi_\lambda)|_{N+\epsilon,\Omega^\prime}\\[16pt]
&=&A+B+C+D+E+F.
\end{array}
\end{equation*}

The term $A$ has been estimated before, and hence $\lambda|w_\lambda|_{N+\epsilon,\Omega^\prime}=O(\lambda)$. For $B$ and $C$ we use the following estimates:
\begin{equation*}
\begin{array}{lllll}
&&\displaystyle \int_{\Omega^{\prime}}\alpha_N^{N+\epsilon} \frac{\delta_j^{\frac{N-2}{2}(N+\epsilon)}}{(\delta_j^2+|x|^2)^{\frac{N-2}{2}(N+\epsilon)}} \ dx = \alpha_N^{N+\epsilon} \int_{\Omega^{\prime}/\delta_j} \frac{\delta_j^{-\frac{N-2}{2}(N+\epsilon)+N}}{(1+|y|^2)^{\frac{N-2}{2}(N+\epsilon)}} \ dy\\[16pt]
&=&\displaystyle \alpha_N^{N+\epsilon} \delta_j^{\frac{4-N}{2}N-\epsilon \frac{N-2}{2}} \int_{\R^N} \frac{1}{(1+|y|^2)^{\frac{N-2}{2}(N+\epsilon)}} \ dy \displaystyle\\[16pt] 
&&+ \displaystyle O\left(\delta_j^{\frac{4-N}{2}N-\epsilon \frac{N-2}{2}}\int_{1/\delta_j}^{+\infty} \frac{r^{N-1}}{(1+r^2)^{\frac{N-2}{2}(N+\epsilon)}} \ dr \right).
\end{array}
\end{equation*}
Thus, for all $\epsilon>0$ sufficiently small we have
\begin{equation*}
\begin{array}{lllll}
|PU_{\delta}|_{N+\epsilon,\Omega^\prime}&\leq&\displaystyle \left(\int_{\Omega^{\prime}}\alpha_N^{N+\epsilon} \frac{\delta_j^{\frac{N-2}{2}(N+\epsilon)}}{(\delta_j^2+|x|^2)^{\frac{N-2}{2}(N+\epsilon)}} \ dx\right)^{\frac{1}{N+\epsilon}}\\[16pt]
&=&\displaystyle \alpha_N \delta_j^{\frac{4-N}{2}+O(\epsilon)} \left(\int_{\R^N} \frac{1}{(1+|y|^2)^{\frac{N-2}{2}(N+\epsilon)}} \ dy\right)^{\frac{1}{N+\epsilon}} + \displaystyle o\left(\delta_j^{\frac{4-N}{2}+ O(\epsilon)}\right).
\end{array}
\end{equation*}
From this we deduce that $B=O( \lambda \delta_1^{\frac{4-N}{2}+O(\epsilon)})$, $C=O(\lambda \delta_2^{\frac{4-N}{2}+O(\epsilon)})$. Concerning the term $D$, with similar computations we see that
\begin{equation*}
\begin{array}{lllll}
|PU_{\delta_1}^p|_{N+\epsilon,\Omega^\prime}&\leq&\displaystyle \left(\int_{\Omega^{\prime}}\alpha_N^{\frac{N+2}{2}(N+\epsilon)} \frac{\delta_1^{\frac{N+2}{2}(N+\epsilon)}}{(\delta_1^2+|x|^2)^{\frac{N+2}{2}(N+\epsilon)}} \ dx\right)^{\frac{1}{N+\epsilon}}\\[16pt]
&=&\displaystyle \alpha_N^p \delta_1^{-\frac{N}{2}+O(\epsilon)} \left(\int_{\R^N} \frac{1}{(1+|y|^2)^{\frac{N+2}{2}(N+\epsilon)}} \ dy\right)^{\frac{1}{N+\epsilon}} + \displaystyle o\left(\delta_1^{-\frac{N}{2}+ O(\epsilon)}\right),
\end{array}
\end{equation*}
and hence $D = O(\delta_1^{-\frac{N}{2}+O(\epsilon)})$.
In order to estimate $E$ we remember that by elementary inequalities we have $|g(u+v)-g(u)-g^\prime(u)v|\leq c|v|^p $, for all $u,v \in \R$, for some constant depending only on $p$, and hence we get that 
$$E\leq c ||\Phi_\lambda|^p |_{N+\epsilon,\Omega^\prime} = O(1).$$
For the last term we have the following:

\begin{equation*}
\begin{array}{lllll}
|g^\prime(U_{\delta_2})U_{\delta_1}|_{N+\epsilon,\Omega^\prime}^{N+\epsilon}&=&\displaystyle p^{N+\epsilon} \int_{\Omega^{\prime}}\alpha_N^{\frac{N+2}{2}(N+\epsilon)} \frac{\delta_2^{\frac{4}{N-2}\frac{N-2}{2}(N+\epsilon)}}{(\delta_2^2+|x|^2)^{\frac{4}{N-2}\frac{N-2}{2}(N+\epsilon)}} \frac{\delta_1^{\frac{N-2}{2}(N+\epsilon)}}{(\delta_1^2+|x|^2)^{\frac{N-2}{2}(N+\epsilon)}}\ dx\\[16pt]
&=&\displaystyle p^{N+\epsilon}\alpha_N^{\frac{N+2}{2}(N+\epsilon)} \int_{\Omega^{\prime}} \frac{\delta_2^{-2(N+\epsilon)}}{(1+|x/\delta_2|^2)^{2(N+\epsilon)}} \frac{\delta_1^{-\frac{N-2}{2}(N+\epsilon)}}{(1+|x/\delta_1|^2)^{\frac{N-2}{2}(N+\epsilon)}}\ dx\\[16pt]
&\leq&\displaystyle p^{N+\epsilon}\alpha_N^{\frac{N+2}{2}(N+\epsilon)} \delta_1^{-\frac{N-2}{2}(N+\epsilon)} \delta_2^{-2(N+\epsilon)+N} \int_{\Omega^{\prime}/\delta_2} \frac{1}{(1+|x/\delta_2|^2)^{2(N+\epsilon)}}  dy\\[16pt]
&\leq&\displaystyle p^{N+\epsilon}\alpha_N^{\frac{N+2}{2}(N+\epsilon)} \delta_1^{-\frac{N-2}{2}(N+\epsilon)} \delta_2^{-N-2\epsilon} \int_{\Omega^{\prime}/\delta_2} \frac{1}{(1+|y|^2)^{2(N+\epsilon)}}  dy\\[16pt]
&=&\displaystyle p^{N+\epsilon}\alpha_N^{\frac{N+2}{2}(N+\epsilon)} \delta_1^{-\frac{N-2}{2}(N+\epsilon)} \delta_2^{-N-2\epsilon} \int_{\R^N} \frac{1}{(1+|y|^2)^{2(N+\epsilon)}}  dy\\[16pt]
&&\displaystyle + O\left(\delta_1^{-\frac{N-2}{2}(N+\epsilon)} \delta_2^{-N-2\epsilon} \int_{1/\delta_2}^{+\infty} \frac{r^{N-1}}{(1+r^2)^{2(N+\epsilon)}}\right).\\[16pt]
\end{array}
\end{equation*}
Hence we get that
\begin{equation*}
\begin{array}{lllll}
|g^\prime(U_{\delta_2})U_{\delta_1}|_{N+\epsilon,\Omega^\prime}
&\leq&\displaystyle p \alpha_N^{\frac{N+2}{2}} \delta_1^{-\frac{N-2}{2}} \delta_2^{-1+O(\epsilon)} \left(\int_{\R^N} \frac{1}{(1+|y|^2)^{2(N+\epsilon)}}  dy\right)^{\frac{1}{N+\epsilon}} + o\left(\delta_1^{-\frac{N-2}{2}} \delta_2^{-1+O(\epsilon)}\right).
\end{array}
\end{equation*}
By the same computations we see that
\begin{equation*}
\begin{array}{lllll}
|g^\prime(U_{\delta_2})\Phi_\lambda|_{N+\epsilon,\Omega^\prime}
&=&\displaystyle O\left( \delta_2^{-1+O(\epsilon)} \right).
\end{array}
\end{equation*}
Thus, we get that $$ |F|\leq c(N,p) \delta_1^{-\frac{N-2}{2}} \delta_2^{-1+O(\epsilon)}.$$
Summing up all these estimates, from (\ref{CaldZigm}) and Sobolev imbedding theorem we deduce that $$\|w_\lambda\|_{C^1(\bar\Omega^{\prime\prime})} \leq C \delta_1^{-\frac{N-2}{2}} \delta_2^{-1+O(\epsilon)}, $$
where $C$ is a positive constant depending on $\epsilon,N,\Omega^{\prime\prime}, \Omega^{\prime}$.
\end{proof}
A straightforward consequence of the previous theorem is the following result:
\begin{cor}\label{cor1}
Under the assumptions of Theorem \ref{teostimac1}, for all sufficiently small $\epsilon>0$ we have
$$\int_{\partial B_r} |\nabla w_\lambda|^2 |x| \ d\sigma \leq C(\epsilon,N) \left(\frac{\delta_2}{\delta_1}\right)^{\frac{N-4}{2}}\delta_2^{O(\epsilon)}, $$
for all sufficiently small $\lambda>0$, where $B_r$ is the ball centered at $\xi$ having radius $r=\sqrt{\delta_1\delta_2}$.
\end{cor}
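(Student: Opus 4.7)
The corollary is a direct application of Theorem \ref{teostimac1} combined with the elementary geometry of the sphere $\partial B_r$ for $r=\sqrt{\delta_1\delta_2}$. The plan is to bound $|\nabla w_\lambda|$ pointwise via the $C^1$-estimate and then integrate over $\partial B_r$, keeping careful track of the powers of $\delta_1$ and $\delta_2$.

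First I would fix a compact neighborhood $\bar\Omega''\subset\subset\Omega$ of $\xi$; since $\delta_1,\delta_2\to 0$, for all sufficiently small $\lambda>0$ the ball $B_r$ is contained in $\Omega''$, so Theorem \ref{teostimac1} applies and gives the uniform pointwise bound
\[
|\nabla w_\lambda(x)|^2 \leq C^2\, \delta_1^{-(N-2)}\,\delta_2^{-2+O(\epsilon)} \qquad \text{for all } x\in \partial B_r.
\]
Second, on $\partial B_r$ one has $|x|=r=(\delta_1\delta_2)^{1/2}$ and the surface measure of $\partial B_r$ equals $\omega_N r^{N-1}=\omega_N(\delta_1\delta_2)^{(N-1)/2}$.

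Combining these observations,
\[
\int_{\partial B_r}|\nabla w_\lambda|^2\,|x|\,d\sigma \leq C^2\,\delta_1^{-(N-2)}\,\delta_2^{-2+O(\epsilon)}\,(\delta_1\delta_2)^{1/2}\,\omega_N(\delta_1\delta_2)^{(N-1)/2},
\]
and collecting the exponents of $\delta_1$ and $\delta_2$ gives
\[
\int_{\partial B_r}|\nabla w_\lambda|^2\,|x|\,d\sigma \leq C(\epsilon,N)\,\delta_1^{-\frac{N-4}{2}}\,\delta_2^{\frac{N-4}{2}+O(\epsilon)} = C(\epsilon,N)\left(\frac{\delta_2}{\delta_1}\right)^{\frac{N-4}{2}}\delta_2^{O(\epsilon)},
\]
which is precisely the claimed inequality.

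Since every step is a direct substitution, there is no conceptual obstacle; the only minor point to be careful about is absorbing the $O(\epsilon)$ factors coming from the Sobolev embedding exponent in Theorem \ref{teostimac1} into a single $\delta_2^{O(\epsilon)}$ term, and making sure $\lambda$ is small enough that $B_r\subset\Omega''$ so that the $C^1$-estimate is available on the whole of $\partial B_r$.
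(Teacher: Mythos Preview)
Your proof is correct and is precisely the straightforward computation the paper has in mind: the paper itself gives no proof, merely stating that the corollary is ``a straightforward consequence of the previous theorem.'' Your pointwise use of the $C^1$-bound from Theorem~\ref{teostimac1} together with $|x|=r=(\delta_1\delta_2)^{1/2}$ and $|\partial B_r|=\omega_N r^{N-1}$ is exactly the intended argument.
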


\section{Concentration speeds for $N\geq 7$}
We consider as in the previous sections sign-changing solutions of Problem \ref{PBN} which are of the form $u_\lambda=PU_{\delta_1,\xi}-PU_{\delta_2,\xi}+w_\lambda$, with $\delta_1=\delta_1(\lambda)$, $\delta_2=\delta_2(\lambda)$ satisfying $\delta_2=o(\delta_1)$ as $\lambda \rightarrow 0$. 
In addition we assume that $\delta_i$, for $i=1,2$, is of the form 

\begin{equation}\label{formadelta}
\delta_i=d_i \lambda^{\alpha_i},
\end{equation}
where $d_i=d_i(\lambda)$ is a strictly positive function such that $d_i \rightarrow \bar d_i>0$, as $\lambda\rightarrow 0$, and the exponents $\alpha_i$ satisfy $0<\alpha_1< \alpha_2$.
Following the ideas contained in \cite{Rey} and applying the asymptotic relation \eqref{eqfinalepohozaev}, found in the proof of Theorem \ref{teoprincipintro}, we determine precisely the exponents $\alpha_1$, $\alpha_2$ in the case $N\geq 7$. We observe that these speeds are exactly the same used in \cite{IACVAIR} to construct solutions of \eqref{PBN} of the form \eqref{formasoluz}.

\begin{teo}\label{teo2}
Let $\Omega$ be a bounded open set of $\R^N$ with smooth boundary, $N\geq 7$, and let $\xi \in \Omega$. 
Let $u_\lambda$ a solution of (\ref{PBN}) such that $u_\lambda$ is of the form $u_\lambda=PU_{\delta_1,\xi}-PU_{\delta_2,\xi}+w_\lambda$, where $\delta_i$, for $i=1,2$, is of the form (\ref{formadelta}) with $\alpha_2 > \alpha_1>0$, $w_\lambda \in V_{\lambda,\xi}$, $V_{\lambda,\xi}$ is the subspace of $H_0^1(\Omega)$:
$$V_{\lambda,\xi}:=\left\{v \in H_0^1(\Omega);\ \ (v,PU_{\delta_i,\xi})_{H_0^1(\Omega)}=\left(v,P\frac{\partial U_{\delta_i,\xi}}{\partial \delta_i}\right)_{H_0^1(\Omega)}=0, \ \ i=1,2\right\}.$$
Moreover assume that  $|w_\lambda|=o(\delta_1^{-\frac{N-2}{2}})$, $|\nabla w_\lambda|=o( \delta_1^{-\frac{N}{2}})$, uniformly in compact subsets of $\Omega$. Then $\alpha_1=\frac{1}{N-4}$, $\alpha_2=\frac{3N-10}{(N-4)(N-6)}$.
\end{teo}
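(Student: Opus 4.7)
The strategy is to produce two independent algebraic relations between $\alpha_1$ and $\alpha_2$, and then solve the resulting linear system. As the first relation, I would first observe that the asymptotic identity \eqref{eqfinalepohozaev} obtained in the proof of Theorem \ref{teoprincipintro} in fact remains valid for every $N \geq 5$: the only place the restriction $N\in\{5,6\}$ entered was to guarantee that $(1+|y|^2)^{-(N-2)}\in L^1(\R^N)$ (used to produce the constant $a_1$), and this holds for all $N\geq 5$, while every other estimate depends only on the hypotheses on $|w_\lambda|$ and $|\nabla w_\lambda|$ in compact subsets of $\Omega$, which are exactly those assumed here. Plugging $\delta_i=d_i\lambda^{\alpha_i}$ into \eqref{eqfinalepohozaev} and matching leading powers of $\lambda$ on the two sides yields
\[
1+2\alpha_2 \;=\; \frac{N-2}{2}(\alpha_2-\alpha_1). \qquad (\star)
\]

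For the second relation I would follow Rey's scheme \cite{Rey} for isolated bubble concentration: apply the Pohozaev identity to $u_\lambda$ on the whole of $\Omega$ (equivalently, on a fixed ball $B_{R_0}(\xi)\subset\subset\Omega$ combined with integration by parts on the complement). Since $\delta_2=o(\delta_1)$ and $\|w_\lambda\|\to 0$, the leading piece of the left-hand side $\lambda\int_\Omega u_\lambda^2\,dx$ is $\lambda a_1 \delta_1^2(1+o(1))$, which is finite for $N\geq 5$. On the right-hand side, the orthogonality conditions defining $V_{\lambda,\xi}$ annihilate the Lagrange-multiplier-type contributions coming from $w_\lambda$, while the pointwise control on $w_\lambda$, $\nabla w_\lambda$ and the expansion of $PU_{\delta_1,\xi}$ from Lemma \ref{svilprbubb} reduce the inner-bubble and remainder contributions to strictly lower order; what survives is the classical single-bubble balance
\[
\lambda\, \delta_1^2 \;\sim\; C_N\, H(\xi,\xi)\,\delta_1^{N-2}
\]
with $H(\xi,\xi)>0$. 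Matching exponents gives $\alpha_1(N-4)=1$, that is $\alpha_1 = 1/(N-4)$. Substituting this into $(\star)$ and simplifying yields $\alpha_2 = (3N-10)/((N-4)(N-6))$, as claimed; the hypothesis $N\geq 7$ is precisely what is needed so that $\alpha_2$ is positive, finite and strictly greater than $\alpha_1$.

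The main technical obstacle lies in the second step: one has to check rigorously that, in the global Pohozaev identity, every interaction term between $PU_{\delta_1,\xi}$, $PU_{\delta_2,\xi}$ and $w_\lambda$ is of order strictly smaller than the two leading terms $\lambda\delta_1^2$ and $\delta_1^{N-2}H(\xi,\xi)$. The $H_0^1$-orthogonality of $w_\lambda$ (which kills the Lagrange-multiplier contributions arising from testing against $PU_{\delta_i,\xi}$ and $P\partial_{\delta_i}U_{\delta_i,\xi}$) together with the pointwise bounds on $|w_\lambda|$ and $|\nabla w_\lambda|$ are exactly what make this bookkeeping possible; any weakening of either hypothesis seems to put the required leading balance at risk.
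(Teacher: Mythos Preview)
Your overall strategy coincides with the paper's: obtain the relation $(\star)$ by noting that \eqref{eqfinalepohozaev} is valid for all $N\geq 5$, and obtain $\alpha_1=1/(N-4)$ from the Pohozaev identity on the whole of $\Omega$, which after expansion yields $a_1\lambda\delta_1^2(1+o(1))=a_2\delta_1^{N-2}(1+o(1))$ with $a_2$ proportional to $H(\xi,\xi)$.

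Two points deserve sharpening, however. First, your description of the role of the orthogonality $w_\lambda\in V_{\lambda,\xi}$ is off: since $u_\lambda$ is an exact solution, the Pohozaev identity on $\Omega$ holds without any Lagrange-multiplier error terms, so there is nothing to ``annihilate''. In the paper the orthogonality enters only indirectly, through the estimate $\|w_\lambda\|\leq c(\delta_1^{N-2}+\lambda\delta_1^2)$ borrowed from \cite{AMP2} (see \eqref{stimarestopacella}--\eqref{stimafinresto}); this is where $w_\lambda\in V_{\lambda,\xi}$ is actually used. Second, the pointwise hypotheses on $w_\lambda$ and $\nabla w_\lambda$ are only on compact subsets of $\Omega$, so they give no direct control on the boundary term $\int_{\partial\Omega}(\partial_\nu w_\lambda)^2(x\cdot\nu)\,d\sigma$. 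The paper handles this (Lemma~\ref{lem2}) by multiplying $w_\lambda$ by a cutoff vanishing near $\xi$ and applying Rey's trace inequality $|\partial_\nu v|_{2,\partial\Omega}\leq C|\Delta v|_{2N/(N+1),\Omega}$; the right-hand side is then bounded using the $\|w_\lambda\|$ estimate above. This is the genuine technical content of the second step, and your proposal glosses over it.
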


In order to prove Theorem \ref{teo2} we need some preliminary lemmas. Without loss of generality we assume that $\xi=0$. The first one is the following:
\begin{lem}\label{lem1}
Let $\Omega$ be a bounded open set of $\R^N$ with smooth boundary and assume that $0 \in \Omega$, $N\geq5$. Then, as $\delta \rightarrow 0$, we have
$$\int_{\partial \Omega} \left(\frac{\partial PU_{\delta}}{\partial \nu}\right)^2 (x \cdot \nu) \ d \sigma = a_2 \delta^{N-2}+ o\left(\delta^{N-2}\right),$$
for some positive real number $a_2$, depending only on $N$ and $\Omega$. 
\end{lem}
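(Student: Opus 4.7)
The plan is to apply a Pohozaev-type identity to $PU_\delta$, which solves $-\Delta PU_\delta = U_\delta^p$ in $\Omega$ and $PU_\delta = 0$ on $\partial\Omega$, with $p = (N+2)/(N-2)$. Multiplying the equation by $x\cdot\nabla PU_\delta$ and integrating by parts (using $\nabla PU_\delta = (\partial_\nu PU_\delta)\nu$ on $\partial\Omega$) gives
\begin{equation*}
\int_{\partial\Omega}(\partial_\nu PU_\delta)^2(x\cdot\nu)\,d\sigma \;=\; -2\int_\Omega U_\delta^p\,(x\cdot\nabla PU_\delta)\,dx \;-\; (N-2)\int_\Omega PU_\delta\,U_\delta^p\,dx,
\end{equation*}
where I also used $\int_\Omega |\nabla PU_\delta|^2\,dx = \int_\Omega PU_\delta\,U_\delta^p\,dx$. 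Thus the task reduces to an asymptotic expansion of the two interior integrals as $\delta\to 0$.

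Writing $PU_\delta = U_\delta - \varphi_\delta$, the $U_\delta$-contributions cancel at leading order. Indeed, $\int_\Omega U_\delta^p(x\cdot\nabla U_\delta)\,dx = \int_\Omega x\cdot\nabla\bigl(U_\delta^{p+1}/(p+1)\bigr)\,dx = -\tfrac{N-2}{2}\int_\Omega U_\delta^{p+1}\,dx + O(\delta^N)$, the boundary residue being $O(\delta^N)$ because $U_\delta^{p+1} = O(\delta^N)$ on $\partial\Omega$ (where $|x|$ is bounded away from $0$). Consequently the $U_\delta$-parts of the two interior integrals combine to $(N-2)\int_\Omega U_\delta^{p+1}\,dx - (N-2)\int_\Omega U_\delta^{p+1}\,dx + O(\delta^N) = O(\delta^N)$, which is the familiar Pohozaev obstruction at the critical exponent.

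The genuine $\delta^{N-2}$-contribution therefore comes entirely from $\varphi_\delta$. By Lemma \ref{svilprbubb}, $\varphi_\delta(x) = \alpha_N\delta^{(N-2)/2}H(x,0) + o(\delta^{(N-2)/2})$ uniformly on compact subsets of $\Omega$, and a change of variables $x=\delta y$ shows $\int_\Omega U_\delta^p\,dx = \alpha_N^p c_*\delta^{(N-2)/2} + o(\delta^{(N-2)/2})$ with $c_* := \int_{\R^N}(1+|y|^2)^{-(N+2)/2}\,dy > 0$. Since $U_\delta^p$ concentrates at the origin, this yields
\[ \int_\Omega \varphi_\delta\,U_\delta^p\,dx \;=\; \alpha_N^{p+1}\,c_*\,H(0,0)\,\delta^{N-2} + o(\delta^{N-2}). \]
The companion term $\int_\Omega U_\delta^p(x\cdot\nabla\varphi_\delta)\,dx$ is $o(\delta^{N-2})$: its leading contribution is proportional to $\delta^{(N-2)/2}\int_\Omega (x\cdot\nabla H(x,0))\,U_\delta^p\,dx$, and since $x\cdot\nabla H(0,0)$ is a linear form integrated against the radial measure $U_\delta^p\,dx$, the Taylor constant term vanishes by parity, leaving only an $O(\delta^N|\log\delta|)$ remainder.

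Putting everything together I obtain the stated expansion with $a_2 := (N-2)\,\alpha_N^{p+1}\,c_*\,H(0,0)$. Positivity is automatic: $H(\cdot,0)$ is harmonic in $\Omega$ with strictly positive boundary data $1/|x|^{N-2}$, so $H(0,0) > 0$ by the maximum principle. The main obstacle I anticipate is the clean bookkeeping of the cancellation in the second step: one must show that both the exterior tail $\int_{\R^N\setminus\Omega}U_\delta^{p+1}\,dx$ and the boundary residue $\int_{\partial\Omega}(x\cdot\nu)U_\delta^{p+1}\,d\sigma$ are $O(\delta^N)=o(\delta^{N-2})$, so that only the $\varphi_\delta$-correction survives at order $\delta^{N-2}$; the remaining estimates are routine concentration/rescaling arguments built on Lemma \ref{svilprbubb}.
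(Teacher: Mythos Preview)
Your approach is correct and essentially the same as the paper's: both start from the Pohozaev-type identity obtained by multiplying $-\Delta PU_\delta = U_\delta^p$ by $x\cdot\nabla PU_\delta$, and both extract the $\delta^{N-2}$ contribution from the correction $\varphi_\delta = U_\delta - PU_\delta \approx \alpha_N\delta^{(N-2)/2}H(\cdot,0)$ via concentration of $U_\delta^p$ at the origin. The only organizational difference is that the paper rewrites $x\cdot\nabla U_\delta$ via the scaling identity $-x\cdot\nabla U_\delta = \tfrac{N-2}{2}U_\delta + \delta\,\partial_\delta U_\delta$ (so the cancellation comes from $\int_{\R^N}U_\delta^p\,\partial_\delta U_\delta = 0$), whereas you invoke the critical Pohozaev cancellation for $U_\delta$ directly; these are two faces of the same computation, and your explicit constant $a_2=(N-2)\alpha_N^{p+1}c_*H(0,0)$ together with the maximum-principle argument for $H(0,0)>0$ is a clean way to confirm positivity.
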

\begin{proof}\
We multiply the equation $-\Delta P U_{\delta}=U_{\delta}^p$ by $\sum_{i=1}^N x_i \frac{\partial P U_{\delta}}{\partial x_i}$ and we integrate on $\Omega$. On one hand, integrating by parts we obtain
\begin{equation}\label{eq0lem1}
\begin{array}{lll}
\displaystyle && \displaystyle \int_\Omega - \Delta PU_\delta \sum_{i=1}^N x_i \frac{\partial P  U_{\delta}}{\partial x_i} \ dx\\
&=& \displaystyle \left(1-\frac{N}{2}\right) \int_\Omega |\nabla PU_\delta|^2 \ dx - \frac{1}{2} \int_{\partial \Omega} \left(\frac{\partial PU_{\delta}}{\partial \nu}\right)^2 (x \cdot \nu) \ d \sigma\\[12pt]
&=& \displaystyle \left(1-\frac{N}{2}\right) \int_\Omega U_\delta^p PU_\delta \ dx - \frac{1}{2} \int_{\partial \Omega} \left(\frac{\partial PU_{\delta}}{\partial \nu}\right)^2 (x \cdot \nu) \ d \sigma.\\
\end{array}
\end{equation}
On the other hand, we have
\begin{equation}\label{eq1lem1}
\begin{array}{lll}
 \displaystyle \int_\Omega U_\delta^p \sum_{i=1}^N x_i \frac{\partial P  U_{\delta}}{\partial x_i} \ dx
&=& \displaystyle - \sum_{i=1}^N \int_\Omega \left(U_\delta^p + p x_i U_\delta^{p-1} \frac{\partial   U_{\delta}}{\partial x_i}\right) { P  U_{\delta}} \ dx\\[12pt]
&=& \displaystyle -N \int_\Omega U_\delta^p { P  U_{\delta}} \ dx- p  \sum_{i=1}^N \int_\Omega x_i U_\delta^{p-1} \frac{\partial   U_{\delta}}{\partial x_i} {P  U_{\delta}} \ dx.
\end{array}
\end{equation}
By elementary computations we see that $$-\sum_{i=1}^N x_i U_\delta^{p-1} \frac{\partial U_{\delta}}{\partial x_i}  = \frac{N-2}{2} U_\delta + \delta \frac{\partial U_\delta}{\partial \delta},$$
and hence from \eqref{eq1lem1} we get that

\begin{equation}\label{eq2lem1}
\begin{array}{lll}
 &&\displaystyle \int_\Omega U_\delta^p \sum_{i=1}^N x_i \frac{\partial P  U_{\delta}}{\partial x_i} \ dx\\[12pt]
&=& \displaystyle -N \int_\Omega U_\delta^p { P  U_{\delta}} \ dx+ p \frac{N-2}{2} \int_\Omega U_\delta^{p} {P  U_{\delta}} \ dx +  p \delta \int_\Omega U_\delta^{p-1} \frac{\partial U_\delta}{\partial \delta} {P  U_{\delta}} \ dx\\[12pt]
&=& \displaystyle \left(1-\frac{N}{2}\right) \int_\Omega U_\delta^p { P  U_{\delta}} \ dx+  p \delta \int_\Omega U_\delta^{p-1} \frac{\partial U_\delta}{\partial \delta} {P  U_{\delta}} \ dx.\\
\end{array}
\end{equation}

We analyze the last term of \eqref{eq2lem1}. Applying Lemma \ref{svilprbubb} and since it is well known that $$\int_{\R^N}U_\delta^p\frac{\partial U_\delta}{\partial \delta} \ dx=0,$$ we have
\begin{equation}\label{eq3lem1}
\begin{array}{lll}
 \displaystyle   p \delta \int_\Omega U_\delta^{p-1} \frac{\partial U_\delta}{\partial \delta} {P  U_{\delta}} \ dx &= &  \displaystyle   p \delta \int_\Omega U_\delta^{p-1} \frac{\partial U_\delta}{\partial \delta}   U_{\delta} \ dx- p\alpha_N \delta^{\frac{N}{2}} \int_\Omega U_\delta^{p-1} \frac{\partial U_\delta}{\partial \delta} H(x,0)   \ dx \\[12pt]
 && \displaystyle + o\left(\delta^{\frac{N}{2}} \int_\Omega U_\delta^{p-1} \frac{\partial U_\delta}{\partial \delta} H(x,0)   \ dx\right)\\[12pt]
 &= &  \displaystyle   - p \delta \int_{\R^N\setminus\Omega} U_\delta^{p} \frac{\partial U_\delta}{\partial \delta}  \ dx- p\alpha_N \delta^{\frac{N}{2}} \int_\Omega U_\delta^{p-1} \frac{\partial U_\delta}{\partial \delta} H(x,0)   \ dx \\[12pt]
 && \displaystyle + o\left(\delta^{\frac{N}{2}} \int_\Omega U_\delta^{p-1} \frac{\partial U_\delta}{\partial \delta} H(x,0)   \ dx\right),
\end{array}
\end{equation}
where $H$ denotes, the regular part of the Green function for the Laplacian. By definition it is easy to see that
\begin{equation}\label{eq4lem1}
\begin{array}{lll}
 \displaystyle  \left|-p \delta \int_{\R^N\setminus\Omega} U_\delta^{p} \frac{\partial U_\delta}{\partial \delta}  \ dx\right| &\leq& \displaystyle   \alpha_N^{p+1} \frac{N+2}{2} \ \delta \int_{\R^N\setminus\Omega} \frac{\delta^{\frac{N+2}{2}}}{\left(\delta^2+|x|^2\right)^{\frac{N+2}{2}}} \frac{\delta^{\frac{N-2}{2}}\left||x|^2-\delta^2\right|}{\left(\delta^2+|x|^2\right)^{\frac{N}{2}}}\ dx\\[16pt]
  &\leq& \displaystyle   \alpha_N^{p+1} \frac{N+2}{2} \ \int_{\R^N\setminus\Omega} \frac{\delta^{N+1}}{|x|^{N+2}} \frac{\left||x|^2-\delta^2\right|}{|x|^{{N}}}\ dx\\[16pt]
 &=& \displaystyle   O \left( \delta^{N+1}\right).
 \end{array}
\end{equation}

Moreover, by the usual change of variable and applying the mean value theorem, we have

\begin{equation}\label{eq5lem1}
\begin{array}{lll}
 \displaystyle  p\alpha_N \delta^{\frac{N}{2}} \int_\Omega U_\delta^{p-1} \frac{\partial U_\delta}{\partial \delta} H(x,0)   \ dx &=&  \displaystyle  p\alpha_N^{p+1} \delta^{\frac{N-2}{2}} \int_\Omega \frac{\delta^{2}}{\left(\delta^2+|x|^2\right)^{2}} \frac{\delta^{\frac{N-2}{2}}\left(|x|^2-\delta^2\right)}{\left(\delta^2+|x|^2\right)^{\frac{N}{2}}}  H(x,0)   \ dx\\[16pt]
 &=&  \displaystyle  p\alpha_N^{p+1} \delta^{\frac{N-2}{2}} \int_\Omega \frac{\delta^{2}}{\delta^{4}\left(1+|\frac{x}{\delta}|^2\right)^{2}} \frac{\delta^{\frac{N-2}{2}}\delta^2\left(|\frac{x}{\delta}|^2-1\right)}{\delta^N\left(1+|\frac{x}{\delta}|^2\right)^{\frac{N}{2}}}  H(x,0)   \ dx\\[16pt]
  &=&  \displaystyle  p\alpha_N^{p+1} \delta^{{N-2}} \int_{\Omega/\delta} \frac{1}{\left(1+|y|^2\right)^{2}} \frac{\left(|y|^2-1\right)}{\left(1+|y|^2\right)^{\frac{N}{2}}}  H(\delta y,0)   \ dy\\[16pt]
    &=&  \displaystyle  p\alpha_N^{p+1} \delta^{{N-2}} \int_{\Omega/\delta} \frac{1}{\left(1+|y|^2\right)^{2}} \frac{\left(|y|^2-1\right)}{\left(1+|y|^2\right)^{\frac{N}{2}}}  H(0,0)   \ dy\\[16pt]
    &+&  \displaystyle O\left( \delta^{N-1} \int_{\Omega/\delta} \frac{1}{\left(1+|y|^2\right)^{2}} \frac{\left(|y|^2-1\right)}{\left(1+|y|^2\right)^{\frac{N}{2}}}(\nabla H(\eta y,0)\cdot y)   \ dy\right)\\[16pt]
        &=&  \displaystyle  p\alpha_N^{p+1} \delta^{{N-2}} \int_{\R^N} \frac{1}{\left(1+|y|^2\right)^{2}} \frac{\left(|y|^2-1\right)}{\left(1+|y|^2\right)^{\frac{N}{2}}}  H(0,0)   \ dy\\[16pt]
         &+&  \displaystyle O\left(\delta^{{N-2}} \int_{1/\delta}^{+\infty} \frac{r^{N-1}}{\left(1+r^2\right)^{2}} \frac{\left(r^2-1\right)}{\left(1+r^2\right)^{\frac{N}{2}}}  H(0,0)   \ dr\right)\\[16pt]
    &+&  \displaystyle O\left( \delta^{{N-1}} \int_{\Omega/\delta} \frac{1}{\left(1+|y|^2\right)^{2}} \frac{\left(|y|^2-1\right)}{\left(1+|y|^2\right)^{\frac{N}{2}}}(\nabla H(\eta y,0)\cdot y)   \ dy\right)\\[16pt]
     &=&  \displaystyle  p\alpha_N^{p+1}H(0,0)  \delta^{{N-2}} \int_{\R^N} \frac{\left(|y|^2-1\right)}{\left(1+|y|^2\right)^{\frac{N+4}{2}}}    \ dy + O(\delta^{N-1}).
 \end{array}
\end{equation}
Finally from \eqref{eq0lem1}-\eqref{eq5lem1} we get that
$$ \int_{\partial \Omega} \left(\frac{\partial PU_{\delta}}{\partial \nu}\right)^2 (x \cdot \nu) \ d \sigma=2 p\alpha_N^{p+1}H(0,0)  \delta^{{N-2}} \int_{\R^N} \frac{\left(|y|^2-1\right)}{\left(1+|y|^2\right)^{\frac{N+4}{2}}}    \ dy + O(\delta^{N-1}),$$
and the proof is complete.
\end{proof}
Another preliminary lemma is the following:
\begin{lem}\label{lem2}
Under the assumptions of Theorem \ref{teo2}, as $\lambda \rightarrow 0$, we have
$$\left|\int_{\partial\Omega}\left(\frac{\partial w_\lambda}{\partial \nu}\right)^2 (x \cdot \nu) \ d\sigma \right|= O (\lambda^2\delta_1^{4})+o(\delta_1^{N-2}). $$
\end{lem}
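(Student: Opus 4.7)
The idea is to establish a Pohozaev-type identity for $w_\lambda$ alone, using the fact that $w_\lambda\in H^1_0(\Omega)$ satisfies
$$-\Delta w_\lambda = f_\lambda := \lambda w_\lambda + \lambda(PU_{\delta_1}-PU_{\delta_2}) + U_{\delta_2}^p - U_{\delta_1}^p + |u_\lambda|^{p-1}u_\lambda$$
in $\Omega$, with $w_\lambda=0$ on $\partial\Omega$. Multiplying this equation by $\sum_i x_i\partial_i w_\lambda$, integrating over $\Omega$, and reproducing the integration by parts already carried out in the proof of Lemma \ref{lem1} (in which the vanishing of $w_\lambda$ on $\partial\Omega$ yields $\nabla w_\lambda=(\partial_\nu w_\lambda)\nu$ there), I obtain
$$\frac{1}{2}\int_{\partial\Omega}\Bigl(\frac{\partial w_\lambda}{\partial\nu}\Bigr)^2(x\cdot\nu)\,d\sigma = \Bigl(1-\frac{N}{2}\Bigr)\|w_\lambda\|^2 - \int_\Omega f_\lambda\,(x\cdot\nabla w_\lambda)\,dx.$$
It then suffices to show that both $\|w_\lambda\|^2$ and the bulk integral on the right are $O(\lambda^2\delta_1^4)+o(\delta_1^{N-2})$.

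For $\|w_\lambda\|^2$, I would test the equation against $w_\lambda$ itself: $\|w_\lambda\|^2 = \int_\Omega f_\lambda w_\lambda\,dx$. The orthogonality $w_\lambda\in V_{\lambda,\xi}$ gives $(w_\lambda,PU_{\delta_i})_{H^1_0}=0$, which via $-\Delta PU_{\delta_i}=U_{\delta_i}^p$ translates into $\int_\Omega U_{\delta_i}^p w_\lambda=0$ for $i=1,2$, annihilating the $U_{\delta_2}^p-U_{\delta_1}^p$ part of $f_\lambda$. The contribution of $|u_\lambda|^{p-1}u_\lambda$ is handled by expanding around $V_\lambda:=PU_{\delta_1}-PU_{\delta_2}$ through the elementary concavity-type estimate
$$\bigl||a+b|^{p-1}(a+b) - |a|^{p-1}a - p|a|^{p-1}b\bigr|\le C|b|^p,$$
valid for $N\ge 7$ since $p-1=4/(N-2)<1$, then developing $V_\lambda$ via Lemma \ref{svilprbubb} and combining H\"older with the $L^q$-bubble bounds used in the proof of Theorem \ref{teostimac1}. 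The remaining linear term $\lambda\int_\Omega u_\lambda w_\lambda\,dx$ contributes at most $O(\lambda^2\delta_1^4)$ (again by H\"older and the $L^{2N/(N+2)}$-bound $|U_{\delta_j}|\le C\delta_j^2$), yielding $\|w_\lambda\|^2 = O(\lambda^2\delta_1^4)+o(\delta_1^{N-2})$.

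For the bulk integral, since $w_\lambda=0$ on $\partial\Omega$ a further integration by parts gives
$$\int_\Omega f_\lambda(x\cdot\nabla w_\lambda)\,dx = -\int_\Omega \bigl(Nf_\lambda + x\cdot\nabla f_\lambda\bigr)w_\lambda\,dx,$$
and I would then split according to the four summands of $f_\lambda$. Each piece is estimated by H\"older together with the $L^q$-norm bounds on $PU_{\delta_j}$, $\nabla U_{\delta_j}$ and $\varphi_{\delta_j}$ from Lemma \ref{svilprbubb}; the second orthogonality condition $(w_\lambda,P\partial_{\delta_i}U_{\delta_i})_{H^1_0}=0$ cancels the most singular contribution coming from $\nabla U_{\delta_i}^p$. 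The final outcome is again $O(\lambda^2\delta_1^4)+o(\delta_1^{N-2})$, from which the claim follows.

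The main obstacle is the interaction between the critical nonlinearity and the two widely different scales $\delta_2=o(\delta_1)$: in dimension $N\ge 7$ one only has a \emph{subquadratic} Taylor remainder for $s\mapsto|s|^{p-1}s$, so matching this inequality against the precise decay rates of the two bubbles in such a way that the two orthogonality conditions kill exactly the terms that would otherwise dominate --- without losing a power of $\delta_2/\delta_1$ --- is the delicate technical point where the specific form of the error $O(\lambda^2\delta_1^4)+o(\delta_1^{N-2})$ is pinned down.
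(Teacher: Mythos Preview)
Your route is genuinely different from the paper's. The paper never writes a Pohozaev identity for $w_\lambda$; instead it first bounds $\left|\int_{\partial\Omega}(\partial_\nu w_\lambda)^2(x\cdot\nu)\,d\sigma\right|\le c(\Omega)\int_{\partial\Omega}(\partial_\nu w_\lambda)^2\,d\sigma$, introduces a cut-off $\eta$ vanishing on a fixed ball around $\xi$, and applies Rey's boundary trace inequality
\[
\Bigl|\frac{\partial(\eta w_\lambda)}{\partial\nu}\Bigr|_{2,\partial\Omega}^2\le C\,|g_\lambda|_{\frac{2N}{N+1},\Omega}^2
\]
to the equation satisfied by $\eta w_\lambda$. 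The point of the cut-off is that all bubble terms in $g_\lambda$ are now supported away from $\xi$, so their $L^{\frac{2N}{N+1}}$ norms are trivially $o(\delta_1^{N-2})$; the only surviving contributions are of size $O(\|w_\lambda\|^2)$. The paper then bounds $\|w_\lambda\|$ by quoting the estimate of \cite{AMP2}, $\|w_\lambda\|\le c\bigl(\lambda\delta_1^{(N-2)/2}+\delta_1^{N-2}+\epsilon_{12}(\log\epsilon_{12}^{-1})^{(N-2)/N}\bigr)$ with $\epsilon_{12}=(\delta_2/\delta_1)^{(N-2)/2}(1+o(1))$, and crucially invokes the relation \eqref{eqfinalepohozaev} (the local Pohozaev balance on $B_{\sqrt{\delta_1\delta_2}}$) to get $\epsilon_{12}\sim\lambda\delta_2^2=o(\lambda\delta_1^2)$.

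Your argument has a gap exactly at this interaction term. After expanding $|u_\lambda|^{p-1}u_\lambda$ around $V_\lambda$ you are left with $\int_\Omega |V_\lambda|^{p-1}V_\lambda\,w_\lambda$, and the two orthogonality relations only annihilate $\int U_{\delta_i}^p w_\lambda$ and $\int U_{\delta_i}^{p-1}\partial_{\delta_i}U_{\delta_i}\,w_\lambda$. The residual $|V_\lambda|^{p-1}V_\lambda - U_{\delta_1}^p + U_{\delta_2}^p$ is \emph{not} killed by orthogonality: its $L^{\frac{2N}{N+2}}$ norm is of order $\epsilon_{12}$ (up to a log), so it contributes $\epsilon_{12}\|w_\lambda\|$ to your energy identity. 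From $\delta_2=o(\delta_1)$ alone, $\epsilon_{12}^2=(\delta_2/\delta_1)^{N-2}$ is neither $O(\lambda^2\delta_1^4)$ nor $o(\delta_1^{N-2})$; you must import \eqref{eqfinalepohozaev} (available under the hypotheses of Theorem~\ref{teo2}) to conclude $\epsilon_{12}=o(\lambda\delta_1^2)$, exactly as the paper does. A second point you gloss over is that absorbing $p\int_\Omega|V_\lambda|^{p-1}w_\lambda^2$ into $\|w_\lambda\|^2$ requires the (standard but non-trivial) coercivity of the linearised operator on $V_{\lambda,\xi}$. With these two inputs your scheme can be completed, but the paper's cut-off plus trace-inequality argument sidesteps all the delicate two-scale bookkeeping near $\xi$ and reaches the estimate much more directly.
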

\begin{proof}
The first step is the following:
\begin{eqnarray*}
\left|\int_{\partial\Omega}\left(\frac{\partial w_\lambda}{\partial \nu}\right)^2 (x \cdot \nu) \ d\sigma \right| &\leq&\int_{\partial\Omega}\left(\frac{\partial w_\lambda}{\partial \nu}\right)^2 |x \cdot \nu| \ d\sigma\\
& \leq&\int_{\partial\Omega}\left(\frac{\partial w_\lambda}{\partial \nu}\right)^2 |x| \ d\sigma\\
& \leq& c(\Omega)\int_{\partial\Omega}\left(\frac{\partial w_\lambda}{\partial \nu}\right)^2 \ d\sigma.
\end{eqnarray*}
Thus we need to estimate $\displaystyle\int_{\partial\Omega}\left(\frac{\partial w_\lambda}{\partial \nu}\right)^2 \ d\sigma$. Let us consider a smooth function $\zeta:\R^N \rightarrow \R$ such that $0\leq\zeta\leq1$, $\zeta(x)=0$ for $|x|\leq\frac{1}{2}$ and $\zeta(x)=1$ for $|x|\geq 1$. We set $\eta(x):=\zeta(\frac{x}{d(0,\partial \Omega)})$. It's elementary to see that $\eta w_\lambda$ is a solution of the following problem
\begin{equation}\label{PR}
\begin{cases}
-\Delta (\eta w_\lambda) = \lambda \eta w_\lambda + g_\lambda & \hbox{in}\ \Omega\\
\eta w_\lambda=0 & \hbox{on}\ \partial \Omega,
\end{cases}
\end{equation}
where $g_\lambda=\eta \left(\lambda PU_{\delta_1}-\lambda PU_{\delta_2}- U_{\delta_1}^p+U_{\delta_2}^p+|u_\lambda|^{2^*-2}u_\lambda\right) - 2 \nabla \eta \cdot \nabla w_\lambda - w_\lambda\Delta\eta.$
Since $\eta w_\lambda$ is a solution of \eqref{PR}, the following inequality holds (see Appendix C in \cite{Rey}):
\begin{equation}\label{traceineq}
\left|\frac{\partial}{\partial \nu}\left(\eta w_\lambda\right) \right|_{2,\partial\Omega}^2 = \left|\frac{\partial w_\lambda}{\partial \nu} \right|_{2,\partial\Omega}^2 \leq C |g_\lambda|_{\frac{2N}{N+1},\Omega}^2,
\end{equation}
where $C$ is a positive constant depending only on $\Omega$ and $N$. Hence, in order to complete the proof, it suffices to estimate the $L^{\frac{2N}{N+1}}(\Omega)$-norm of $g_\lambda$. We point out that, thanks to the multiplication by the cut-off function $\eta$, what occurs around the origin does not count anymore and this will make the boundary estimate sharper. By elementary inequalities we get that 
$$|g_\lambda| \leq c(p) \eta \left(\lambda U_{\delta_1}+ \lambda U_{\delta_2}+ U_{\delta_1}^p + U_{\delta_2}^p + |w_\lambda|^p \right) +2 |\nabla \eta| |\nabla w_\lambda| + |\Delta \eta| |w_\lambda|. $$
Thus we have to estimate the following quantities: $$\lambda|\eta U_{\delta_j} |_{\frac{2N}{N+1},\Omega}, \ |\eta U_{\delta_j}^p |_{\frac{2N}{N+1},\Omega},\ \hbox{for} \ j=1,2,\ \hbox{and} \  |\eta |w_\lambda|^p |_{\frac{2N}{N+1},\Omega}, \ |\ |\nabla \eta| |\nabla w_\lambda|\  |_{\frac{2N}{N+1},\Omega},\ |\ |\Delta \eta| |w_\lambda| \ |_{\frac{2N}{N+1},\Omega}.$$
 This is a long computation already made by O. Rey (see Appendix C of \cite{Rey}), in the case of positive solutions of the form $u_\lambda=PU_{\delta}+ w_\lambda$. In that paper it is shown that
 $$|\eta U_{\delta_j}^p|_{\frac{2N}{N+1},\Omega}^2 = o\left( \delta_j^{N-2}\right), \ \ \ |\eta \lambda U_{\delta_j}|_{\frac{2N}{N+1},\Omega}^2 = O\left(\lambda^2 \delta_j^{N-2}\right),$$ 
 \begin{equation}\label{missingterms}
 \ \ \ \Big| |\nabla \eta||\nabla w_\lambda| \Big|_{\frac{2N}{N+1},\Omega}^2 = O\left(\|w_\lambda\|^2\right),\ \ \ \Big| |\Delta \eta||w_\lambda|\Big|_{\frac{2N}{N+1},\Omega}^2 = O\left(\|w_\lambda\|^2\right).
 \end{equation}
 
Moreover, by the same computations of Appendix C in \cite{Rey} we see that 
  $$\Big| \eta|w_\lambda|^p\Big|_{\frac{2N}{N+1},\Omega}^2=o(\delta_1^{N-2}).$$
   In order to complete the proof  we need to estimate the quantities in (\ref{missingterms}), and hence we have to study the asymptotic behavior of $\|w_\lambda\|$. An estimate for $\|w_\lambda\|$ is contained in \cite{AMP2}; in particular, by the proof of Lemma 3.3 of \cite{AMP2} we see that
    \begin{equation} \label{stimarestopacella}
   \|w_\lambda\| \leq c \left[\sum_i \left(\lambda \delta_i^{(N-2)/2} + \delta_i^{N-2}\right)+\epsilon_{12}(\log \epsilon_{12}^{-1})^{(N-2)/N}\right], 
    \end{equation}
    where $\epsilon_{12}$ is defined by $\epsilon_{12}:=\left(\frac{\delta_1}{\delta_2}+\frac{\delta_2}{\delta_1}\right)^{(2-N)/2}.$ Since $\frac{\delta_2}{\delta_1} \rightarrow 0$ as $\lambda \rightarrow 0$ we see that $$\epsilon_{12}=\left(\frac{\delta_2}{\delta_1}\right)^{\frac{N-2}{2}}+ o\left(\frac{\delta_2}{\delta_1}\right)^{\frac{N-2}{2}}.$$
Moreover by the assumptions on the growth of $\nabla w_\lambda$ and $w_\lambda$, and thanks to \eqref{eqfinalepohozaev} we get that $\epsilon_{12} $ is of the same order as $\lambda \delta_2^2$, hence, since $\delta_2=o(\delta_1)$ as $\lambda \rightarrow 0$, we have that $$\epsilon_{12}(\log \epsilon_{12}^{-1})^{(N-2)/N}=o(\lambda\delta_1^2).$$
Thus, from \eqref{stimarestopacella}, and since $N\geq7$, we deduce that for all sufficiently small $\lambda$ it holds
\begin{equation}\label{stimafinresto}
\|w_\lambda\|\leq c(\delta_1^{N-2}+\lambda\delta_1^2).
 \end{equation}
Summing up all these estimates we deduce the desired relation.
 \end{proof}
 
 \begin{lem}\label{lem3}
Let $\Omega$ be a bounded open set of $\R^N$ with smooth boundary and assume that $0 \in \Omega$, $N\geq5$. Then, as $\delta \rightarrow 0$, we have
$$\int_{\partial \Omega} \left(\frac{\partial PU_{\delta}}{\partial \nu}\right)^2 \ d \sigma = O (\delta^{N-2}).$$
\end{lem}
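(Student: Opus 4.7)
The plan is to mimic the Pohozaev-type scheme of Lemma \ref{lem1}, but to choose a different vector field, one whose restriction to $\partial\Omega$ equals the outer unit normal $\nu$, so that the boundary integral that arises is exactly $\int_{\partial\Omega}\left(\partial PU_\delta/\partial\nu\right)^2 d\sigma$ rather than the weighted version appearing in Lemma \ref{lem1}. Since $0\in\Omega$ we have $d:=\mathrm{dist}(0,\partial\Omega)>0$; I would fix a smooth vector field $\beta:\R^N\to\R^N$ compactly supported in $\{|x|>d/2\}$ and equal to $\nu$ on $\partial\Omega$.

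Multiplying $-\Delta PU_\delta=U_\delta^p$ by $\beta\cdot\nabla PU_\delta$, integrating on $\Omega$, and using that $\nabla PU_\delta=(\partial PU_\delta/\partial\nu)\,\nu$ on $\partial\Omega$ (which holds because $PU_\delta=0$ there), one gets the Rellich-type identity
\begin{equation*}
\frac{1}{2}\int_{\partial\Omega}\left(\frac{\partial PU_\delta}{\partial\nu}\right)^2 d\sigma = \int_\Omega \sum_{i,j}(\partial_j\beta_i)\,\partial_i PU_\delta\,\partial_j PU_\delta\, dx \,-\,\frac{1}{2}\int_\Omega \mathrm{div}(\beta)\,|\nabla PU_\delta|^2\, dx \,-\,\int_\Omega U_\delta^p\,(\beta\cdot\nabla PU_\delta)\, dx.
\end{equation*}

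All three right-hand integrals are supported inside $\mathrm{supp}(\beta)$, which is a compact subset of $\overline{\Omega}$ bounded away from the origin. On this set the bounds $|U_\delta|+|\nabla U_\delta|=O(\delta^{(N-2)/2})$ and $U_\delta^p=O(\delta^{(N+2)/2})$ follow at once from the explicit formula \eqref{Udelta}, so to close the argument I need a matching uniform estimate $|\nabla PU_\delta|=O(\delta^{(N-2)/2})$ \emph{up to} $\partial\Omega$. This is the main technical point, since Lemma \ref{svilprbubb} as stated provides such a $C^1$ bound only on compact subsets of $\Omega$. I would supply it by applying standard Schauder estimates to the harmonic function $\varphi_\delta:=U_\delta-PU_\delta$: its Dirichlet datum $U_\delta|_{\partial\Omega}$ has $C^{1,\alpha}(\partial\Omega)$ norm of order $\delta^{(N-2)/2}$ (because $|x|\geq d$ on $\partial\Omega$), hence $\|\varphi_\delta\|_{C^1(\overline{\Omega})}=O(\delta^{(N-2)/2})$, and therefore $|\nabla PU_\delta|=O(\delta^{(N-2)/2})$ on $\mathrm{supp}(\beta)$.

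Plugging these estimates into the identity, the first two right-hand terms are $O(\delta^{N-2})$ and the third is the sharper $O(\delta^N)$, giving the claimed bound. A shortcut that avoids the Rellich identity altogether is to write $\partial PU_\delta/\partial\nu=\partial U_\delta/\partial\nu-\partial\varphi_\delta/\partial\nu$ on $\partial\Omega$, bound the first summand by the explicit formula and the second by the same Schauder estimate on $\varphi_\delta$, then square and integrate over $\partial\Omega$. Either way, the only input beyond Lemma \ref{svilprbubb} is elliptic regularity up to the boundary, which is what makes the estimate go through despite $\partial\Omega$ not being a compact subset of $\Omega$.
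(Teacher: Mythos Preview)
Your proof is correct, but it follows a different route from the paper's.

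The paper introduces a scalar cut-off $\eta$ vanishing near the origin and equal to $1$ near $\partial\Omega$, writes the Dirichlet problem solved by $\eta\,PU_\delta$, and applies the $L^{2N/(N+1)}$-to-$L^2(\partial\Omega)$ trace inequality of Rey (Appendix~C of \cite{Rey}) to bound $|\partial_\nu PU_\delta|_{2,\partial\Omega}^2$ by the $L^{2N/(N+1)}$-norm of the right-hand side. The terms involving $\nabla\eta$ and $\Delta\eta$ are then controlled by $\|PU_\delta\|_{H^1(\Omega\cap\mathrm{supp}\,\nabla\eta)}^2$, which is $O(\delta^{N-2})$ thanks to the $H^1$ bound (ii) of Lemma~\ref{svilprbubb}. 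Thus the paper stays entirely within integral ($L^p$/$H^1$) estimates and never needs pointwise control of $\nabla PU_\delta$ up to $\partial\Omega$.

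Your argument instead hinges on a \emph{pointwise} $C^1$ bound up to the boundary, obtained via Schauder estimates for the harmonic function $\varphi_\delta$. Once you have $\|\varphi_\delta\|_{C^1(\overline{\Omega})}=O(\delta^{(N-2)/2})$, your ``shortcut'' is in fact the cleanest path: decompose $\partial_\nu PU_\delta=\partial_\nu U_\delta-\partial_\nu\varphi_\delta$ on $\partial\Omega$, bound each piece pointwise by $O(\delta^{(N-2)/2})$, square and integrate. This bypasses both the Rellich identity and Rey's trace inequality entirely. The Rellich route you describe first also works, and your identity and support considerations are correct; it just does more work than necessary given that the Schauder step already delivers the pointwise bound. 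What your approach buys is conceptual simplicity and independence from the specific trace lemma in \cite{Rey}; what the paper's approach buys is that it needs only the $H^1$ information already recorded in Lemma~\ref{svilprbubb}, without invoking boundary Schauder theory.
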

\begin{proof}
We consider a smooth function $\eta:\R^N \rightarrow \R$ having the same properties as the one considered in the previous proof. By elementary computation we see that $\eta  PU_{\delta}$ satisfies
\begin{equation}\label{PR2}
\begin{cases}
-\Delta (\eta PU_{\delta}) = - (\Delta\eta) PU_{\delta} -\nabla \eta \cdot \nabla PU_{\delta} + \eta U_{\delta}^p & \hbox{in}\ \Omega\\
 \eta PU_{\delta}=0 & \hbox{on}\ \partial \Omega.
\end{cases}
\end{equation}
Since $\eta PU_{\delta}$ is a solution of \eqref{PR2}, the following inequality holds:
\begin{equation}\label{lem3eq1}
\left|\frac{\partial}{\partial \nu}\left(\eta PU_{\delta}\right) \right|_{2,\partial\Omega}^2 = \left|\frac{\partial PU_{\delta}}{\partial \nu} \right|_{2,\partial\Omega}^2 \leq C\Big| |\Delta\eta| PU_{\delta} +|\nabla \eta \cdot \nabla PU_{\delta}| + \eta U_{\delta}^p\Big|_{\frac{2N}{N+1},\Omega}^2,
\end{equation}
where $C$ is a positive constant depending only on $\Omega$ and $N$. In order to complete the proof we have to estimate the quantities: $|(\Delta\eta) PU_{\delta}|_{\frac{2N}{N+1}^2,\Omega}$, $|\nabla \eta \cdot \nabla PU_{\delta}|_{\frac{2N}{N+1},\Omega}^2$, $| \eta U_{\delta}^p|_{\frac{2N}{N+1},\Omega}^2$. 
Using the same computations made by O. Rey in  \cite{Rey}, and since  $\eta\equiv 0$ in a neighborhood of the origin we get that
\begin{equation}
\begin{array}{lllll}\label{lem3eq2}
 \displaystyle |\eta U_{\delta}^p|_{\frac{2N}{N+1},\Omega}^2 =  \displaystyle o\left( \delta^{N-2}\right), \ \ \ \displaystyle \Big||\nabla \eta||\nabla PU_\delta|\Big|_{\frac{2N}{N+1},\Omega}^2 =  \displaystyle O\left(\|PU_\delta\|_{\Omega\cap supp( \nabla\eta)}^2\right), \\[12pt]
  \displaystyle \Big| |\Delta \eta|| PU_\delta|\Big|_{\frac{2N}{N+1},\Omega}^2 =  \displaystyle O\left(\|PU_\delta\|_{\Omega\cap supp(\nabla \eta)}^2\right).
 \end{array}
 \end{equation}
Applying Lemma \ref{svilprbubb} and taking account of (\ref{normal2udelta}), since $\nabla \eta \equiv 0$ in an open neighborhood of the origin, we have
\begin{equation}\label{lem3eq3}
\begin{array}{lllll}
\displaystyle \|PU_\delta\|_{\Omega\cap supp( \nabla\eta)}^2 &=&\displaystyle \int_{\Omega\cap supp(\nabla \eta)} |\nabla (U_\delta-\varphi_\delta)|^2 \ dx \\[16pt ]&\leq &\displaystyle \int_{\Omega\cap supp( \nabla\eta)} |\nabla U_\delta|^2 dx  + 2\int_{\Omega\cap supp( \nabla\eta)} |\nabla U_\delta||\nabla \varphi_\delta| dx\\[16pt] 
&&\displaystyle + \int_{\Omega\cap supp( \nabla\eta)} |\nabla \varphi_\delta|^2 dx\\[16pt]
&=&\displaystyle O(\delta^{N-2}).
\end{array}
\end{equation}
From \eqref{lem3eq1}, \eqref{lem3eq2} and \eqref{lem3eq3} we deduce that $$ \left|\frac{\partial PU_{\delta}}{\partial \nu} \right|_{2,\partial\Omega}^2 =O(\delta^{N-2}), $$
and the proof is complete.
\end{proof}
 
\begin{proof}[\bf{Proof of Theorem \ref{teo2}}]
We apply the Pohozaev's identity to $u_\lambda=PU_{\delta_1}-PU_{\delta_2}+w_\lambda$. Since $u_\lambda$ is a solution of Problem \ref{PBN} we have

\begin{equation}\label{stdpohozaev}
\lambda \int_{\Omega} u_\lambda^2 \ dx = \frac{1}{2} \int_{\partial \Omega }\left(\frac{\partial u_\lambda}{\partial \nu}\right)^2 (x\cdot\nu)  \ d\sigma.
\end{equation}

For the left-hand side of \eqref{stdpohozaev}, as in the previous proofs we set $\Phi_\lambda:=w_\lambda-\varphi_{\delta_1}+\varphi_{\delta_2}$, where $\varphi_{\delta_j}=U_{\delta_j}-PU_{\delta_j}$ for $j=1,2$, and we have
\begin{equation} \label{eq1teo2}
\begin{array}{lll}
\displaystyle \lambda \int_{\Omega} u_\lambda^2 \ dx&=&\displaystyle \lambda \int_{\Omega} (PU_{\delta_1}-PU_{\delta_2}+w_\lambda)^2 \ dx\\[12pt]
&=&\displaystyle\lambda \int_{\Omega} (U_{\delta_1}-U_{\delta_2}+\Phi_\lambda)^2 \ dx\\[12pt]
&=&\displaystyle\lambda \int_{\Omega}\left(U_{\delta_1}^2+U_{\delta_2}^2 - 2U_{\delta_1}U_{\delta_2}+2U_{\delta_1}\Phi_\lambda-2U_{\delta_2}\Phi_\lambda+\Phi_\lambda^2\right) \ dx\\[12pt]
&=&A+B+C+D+E+F.
\end{array}
\end{equation}

In order to estimate $A$ and $B$ we use the following

\begin{eqnarray}\label{eq2teo2}
\begin{array}{lll}
\displaystyle {\lambda} \int_\Omega U_{\delta_j}^2 \ dx &=&\displaystyle\lambda \ \alpha_N^2 \int_\Omega \frac{\delta_j^{-(N-2)}}{(1+|x/\delta_j|^2)^{N-2}} \ dx = \displaystyle\lambda \ \alpha_N^2 \int_{\Omega/\delta_j} \frac{\delta_j^{-(N-2)}}{(1+|y|^2)^{N-2}} \delta_j^N \ dy\\[12pt]
  &=&\displaystyle\lambda \ \alpha_N^2  \delta_j^2 \int_{\R^N} \frac{1}{(1+|y|^2)^{N-2}} \ dy  +\displaystyle O\left( \lambda\delta_j^2 \int_{1/\delta_j}^{+\infty} \frac{r^{N-1}}{(1+r^2)^{N-2}} \ dr \right) \\[12pt]
  &=&\displaystyle\lambda \ \alpha_N^2  \delta_j^2 \int_{\R^N} \frac{1}{(1+|y|^2)^{N-2}} \ dy + O\left( \lambda\delta_j^{N-2} \right).
   \end{array}
\end{eqnarray}

We point out that since we are assuming that $N\geq 5$, the first integral in the last line of \eqref{eq2teo2} converges.
To estimate $C$ we apply the following
\begin{eqnarray}\label{eq3teo2}
\begin{array}{lll}
\displaystyle \lambda \int_\Omega U_{\delta_1}U_{\delta_2} \ dx
&=&\displaystyle\lambda \ \alpha_N^2 \int_{\Omega/\delta_1} \frac{\delta_1^{\frac{N+2}{2}}}{(1+|y|^2)^{\frac{N-2}{2}}}  \frac{\delta_2^{\frac{N-2}{2}}}{(\delta_2^2+\delta_1^2|y|^2)^{\frac{N-2}{2}}} \ dy\\[12pt]
&=&\displaystyle\lambda \ \alpha_N^2 \int_{\Omega/\delta_1} \frac{\delta_1^{-\frac{N-6}{2}}}{(1+|y|^2)^{\frac{N-2}{2}}}  \frac{\delta_2^{\frac{N-2}{2}}}{\left(\left(\frac{\delta_2}{\delta_1}\right)^2+|y|^2\right)^{\frac{N-2}{2}}} \ dy\\[12pt]
&\leq&\displaystyle\lambda \ \alpha_N^2 \left(\frac{\delta_2}{\delta_1}\right)^{\frac{N-2}{2}} \delta_1^2 \int_{\Omega/\delta_1} \frac{1}{(1+|y|^2)^{\frac{N-2}{2}}|y|^{N-2}} \ dy \\[12pt]
&=&\displaystyle\lambda \ \alpha_N^2 \left(\frac{\delta_2}{\delta_1}\right)^{\frac{N-2}{2}} \delta_1^2 \int_{\R^N} \frac{1}{(1+|y|^2)^{\frac{N-2}{2}}|y|^{N-2}} \ dy\\[12pt]
&+& \displaystyle O\left(\lambda \left(\frac{\delta_2}{\delta_1}\right)^{\frac{N-2}{2}} \delta_1^2 \int_{1/\delta_1}^{+\infty} \frac{r^{N-1}}{(1+r^2)^{\frac{N-2}{2}}r^{N-2}} \ dr\right)\\[12pt]
&=&\displaystyle\lambda \ \alpha_N^2 \left(\frac{\delta_2}{\delta_1}\right)^{\frac{N-2}{2}} \delta_1^2 \int_{\R^N} \frac{1}{(1+|y|^2)^{\frac{N-2}{2}}|y|^{N-2}} \ dy+ \displaystyle O\left(\lambda \left(\frac{\delta_2}{\delta_1}\right)^{\frac{N-2}{2}} \delta_1^{N-2}\right).
\end{array}
\end{eqnarray}
In order to estimate $D$, $E$, $F$, thanks to \eqref{stimafinresto}, H\"older's inequality and Poincar\'e's inequality we get that
\begin{equation}\label{eq4teo2}
\int_\Omega w_\lambda^2 \leq c_1 \|w_\lambda\|^2 \leq c_2 (\delta_1^{N-2}+\lambda\delta_1^2)^2 . 
\end{equation}
We observe that, by  Lemma \ref{svilprbubb} and since $N\geq5$, we have $|\varphi_{\delta_j}|_{2,\Omega}=O\left(\delta_j^{\frac{N-2}{2}}\right)=o(\delta_j)$. Thus, by definition of $\Phi_\lambda$ and \eqref{eq4teo2} we deduce that
\begin{equation}\label{eq5teo2}
 \int_\Omega \Phi_\lambda^2 \ dx =  \int_\Omega \left(w_\lambda + \varphi_{\delta_2} -  \varphi_{\delta_1} \right)^2 \ dx = o(\delta_1^2),
\end{equation}
and hence
\begin{equation}\label{eq6teo2}
F= o(\lambda\delta_1^2).
\end{equation}
Moreover, by the same computations of \eqref{eq2teo2} we have $\int_\Omega U_{\delta_j}^2 = a_1 \delta_j^2 + o(\delta_j^2)$, for some positive constant $a_1$. Hence by H\"older's inequality and \eqref{eq5teo2} we get that
$$|D|= o(\lambda\delta_1^2), $$
and
$$|E|= o(\lambda\delta_1\delta_2) = o(\lambda\delta_1^2). $$
We analyze now the right-hand side of \eqref{stdpohozaev}: by definition we have
\begin{equation} \label{eq7teo2}
\begin{array}{lll}
\displaystyle  \frac{1}{2} \int_{\partial \Omega }\left(\frac{\partial u_\lambda}{\partial \nu}\right)^2 (x\cdot\nu)  \ d\sigma&=&\displaystyle \frac{1}{2} \int_{\partial \Omega }\left(\frac{\partial PU_{\delta_1}}{\partial \nu}- \frac{\partial PU_{\delta_2}}{\partial \nu} + \frac{\partial w_\lambda}{\partial \nu}\right)^2 (x\cdot\nu)  \ d\sigma\\[16pt]
&=&\displaystyle \frac{1}{2} \int_{\partial \Omega }\left(\frac{\partial PU_{\delta_1}}{\partial \nu}\right)^2 (x\cdot\nu)  \ d\sigma + \frac{1}{2} \int_{\partial \Omega }\left(\frac{\partial PU_{\delta_2}}{\partial \nu}\right)^2 (x\cdot\nu)  \ d\sigma\\[16pt]
&&\displaystyle  -\int_{\partial \Omega } \frac{\partial PU_{\delta_1}}{\partial \nu}\frac{\partial PU_{\delta_2}}{\partial \nu} (x\cdot\nu)  \ d\sigma + \int_{\partial \Omega } \frac{\partial PU_{\delta_1}}{\partial \nu}\frac{\partial w_\lambda}{\partial \nu} (x\cdot\nu)  \ d\sigma\\[16pt]
&&\displaystyle - \int_{\partial \Omega } \frac{\partial PU_{\delta_2}}{\partial \nu}\frac{\partial w_\lambda}{\partial \nu} (x\cdot\nu)  \ d\sigma + \frac{1}{2} \int_{\partial \Omega }\left(\frac{w_\lambda}{\partial \nu}\right)^2 (x\cdot\nu)  \ d\sigma \\[16pt]
&=&A_1+B_1+C_1+D_1+E_1+F_1.
\end{array}
\end{equation}
Thanks to Lemma \ref{lem1} we have:
\begin{equation} \label{eq8teo2}
\begin{array}{lll}
\displaystyle  A_1 &=& \displaystyle  \frac{a}{2} \delta_1^{N-2} + o(\delta_1^{N-2}),\\[12pt]
\displaystyle  B_1 &=& \displaystyle  \frac{a}{2} \delta_2^{N-2} + o(\delta_2^{N-2}).
\end{array}
\end{equation}
Thanks to Lemma \ref{lem3} and applying H\"older inequality we get that
\begin{equation} \label{eq9teo2}
\begin{array}{lll}
\displaystyle  |C_1| &\leq& \displaystyle \int_{\partial \Omega } \left|\frac{\partial PU_{\delta_1}}{\partial \nu}\right|\left|\frac{\partial PU_{\delta_2}}{\partial \nu}\right| |x\cdot\nu|  \ d\sigma \\[12pt]
&\leq& diam(\partial\Omega) \displaystyle \left(\int_{\partial \Omega } \left|\frac{\partial PU_{\delta_1}}{\partial \nu}\right|^2 \ d\sigma \right)^{\frac{1}{2}}  \left(\int_{\partial \Omega } \left|\frac{\partial PU_{\delta_2}}{\partial \nu}\right|^2 \ d\sigma \right)^{\frac{1}{2}}\\[12pt]
&=&\displaystyle O\left(\delta_1^{\frac{N-2}{2}}\delta_2^{\frac{N-2}{2}}\right).
\end{array}
\end{equation}
Thanks to \eqref{traceineq}, Lemma \ref{lem2}, Lemma \ref{lem3} and applying H\"older inequality we get that
\begin{equation} \label{eq10teo2}
\begin{array}{lll}
\displaystyle  |D_1| &\leq& \displaystyle \int_{\partial \Omega } \left|\frac{\partial PU_{\delta_1}}{\partial \nu}\right|\left|\frac{\partial w_\lambda}{\partial \nu}\right| |x\cdot\nu|  \ d\sigma \\[12pt]
&\leq& diam(\partial\Omega) \displaystyle \left(\int_{\partial \Omega } \left|\frac{\partial PU_{\delta_1}}{\partial \nu}\right|^2 \ d\sigma \right)^{\frac{1}{2}}  \left(\int_{\partial \Omega } \left|\frac{\partial w_\lambda}{\partial \nu}\right|^2 \ d\sigma \right)^{\frac{1}{2}}\\[12pt]
&=&\displaystyle o\left(\lambda \delta_1^{2}\right)+ o\left(\delta_1^{N-2}\right).
\end{array}
\end{equation}
\begin{equation} \label{eq11teo2}
\begin{array}{lll}
\displaystyle  |E_1| &\leq& \displaystyle \int_{\partial \Omega } \left|\frac{\partial PU_{\delta_2}}{\partial \nu}\right|\left|\frac{\partial w_\lambda}{\partial \nu}\right| |x\cdot\nu|  \ d\sigma \\[12pt]
&\leq& diam(\partial\Omega) \displaystyle \left(\int_{\partial \Omega } \left|\frac{\partial PU_{\delta_2}}{\partial \nu}\right|^2 \ d\sigma \right)^{\frac{1}{2}}  \left(\int_{\partial \Omega } \left|\frac{\partial w_\lambda}{\partial \nu}\right|^2 \ d\sigma \right)^{\frac{1}{2}}\\[16pt]
&=&\displaystyle o\left(\lambda \delta_1^{2}\right)+ o\left(\delta_1^{N-2}\right).
\end{array}
\end{equation}
\\
\begin{equation} \label{eq12teo2}
\begin{array}{lllll}
\displaystyle  |F_1| &=&\displaystyle \frac{1}{2} \int_{\partial \Omega }\left(\frac{\partial w_\lambda}{\partial \nu}\right)^2 (x\cdot\nu)  \ d\sigma &=&\displaystyle  o\left(\lambda \delta_1^{2}\right)+ o\left(\delta_1^{N-2}\right).
\end{array}
\end{equation}
Summing up all the estimates, from \eqref{stdpohozaev} and since $\delta_2=o(\delta_1)$ as $\lambda \rightarrow 0$, we deduce the following equality:
\begin{equation}\label{lasteqteo2}
a_1 \lambda \delta_1^2 + o( \lambda \delta_1^2)=a_2 \delta_1^{N-2}+o\left(\delta_1^{N-2}\right).
\end{equation}
Since $\delta_j$ is of the form (\ref{formadelta}), we deduce that $\alpha_1$ must satisfy the equation
$$1+2\alpha_1=(N-2)\alpha_1, $$
and hence we get that
 $\alpha_1=\frac{1}{N-4}$. 
Moreover, from \eqref{eqfinalepohozaev} we deduce that $\alpha_1$, $\alpha_2$ must satisfy the following algebraic equation
\begin{equation}\label{eqalgebrica}
1+2\alpha_2 = \frac{N-2}{2}(\alpha_2 - \alpha_1). 
\end{equation}
 Thus, combining this result with (\ref{eqalgebrica}), we get that $\alpha_2=\frac{3N-10}{(N-4)(N-6)}$ and the proof is complete.  
\end{proof}

\end{document}